\definecolor{darkblue}{rgb}{.1, 0.1,.8}
\definecolor{darkgreen}{rgb}{0,0.8,0.2}
\definecolor{darkred}{rgb}{.8, .1,.1}
\newtheorem{lemma}{Lemma}[section]
\newtheorem{theorem}[lemma]{Theorem}
\newtheorem{proposition}[lemma]{Proposition}
\newtheorem{definition}[lemma]{Definition}
\newtheorem{corollary}[lemma]{Corollary}
\newtheorem{example}[lemma]{Example}
\newtheorem{exercise}[lemma]{Exercise}
\newtheorem{remark}[lemma]{Remark}
\newtheorem{fig}[lemma]{Figure}
\newtheorem{tab}[lemma]{Table}
\newcommand{\bth}{\begin{theorem}}
\newcommand{\ethe}{\end{theorem}}
\newcommand{\bre}{\begin{remark}\em }
\newcommand{\ere}{\end{remark}}
\newcommand{\ble}{\begin{lemma}}
\newcommand{\ele}{\end{lemma}}
\newcommand{\bde}{\begin{definition}}
\newcommand{\ede}{\end{definition}}
\newcommand{\bco}{\begin{corollary}}
\newcommand{\eco}{\end{corollary}}
\newcommand{\bpr}{\begin{proposition}}
\newcommand{\epr}{\end{proposition}}
\newcommand{\bexer}{\begin{exercise}}
\newcommand{\eexer}{\end{exercise}}
\newcommand{\bexam}{\begin{example}}
\newcommand{\eexam}{\end{example}}
\newcommand{\bfi}{\begin{fig}}
\newcommand{\efi}{\end{fig}}
\newcommand{\btab}{\begin{tab}}
\newcommand{\etab}{\end{tab}}
\newcommand{\beao}{\begin{eqnarray*}}
\newcommand{\eeao}{\end{eqnarray*}\noindent}
\newcommand{\beam}{\begin{eqnarray}}
\newcommand{\eeam}{\end{eqnarray}\noindent}
\newcommand{\beqq}{\begin{equation}}
\newcommand{\eeqq}{\end{equation}\noindent}
\newcommand{\bce}{\begin{center}}
\newcommand{\ece}{\end{center}}
\newcommand{\barr}{\begin{array}}
\newcommand{\earr}{\end{array}}
\newcommand{\eqd}{\stackrel{d}{=}}
\newcommand{\vague}{\stackrel{\lower0.2ex\hbox{$\scriptscriptstyle
                    \it{v} $}}{\rightarrow}}
\newcommand{\weak}{\stackrel{\lower0.2ex\hbox{$\scriptscriptstyle
                    \it{w} $}}{\rightarrow}}
\newcommand{\what}{\stackrel{\lower0.2ex\hbox{$\scriptscriptstyle
                    \it{\hat{w}} $}}{\rightarrow}}
\newcommand{\bdis}{\begin{displaymath}}
\newcommand{\edis}{\end{displaymath}\noindent}
\newcommand{\N}{\mathbb{N}}
\renewcommand{\P}{\mathbb P}
\newcommand{\R}{\mathbb{R}}
\newcommand{\vep}{\varepsilon}
\def\1{\ensuremath{\mathrm{1}\hspace{-.35em} \mathrm{1}}} 
\def\E{{\mathbb E}}
\def\L{\mathbb{L}}
\def\N{\mathbb{N}}
\def\P{{\mathbb{P}}}
\def\R{\mathbb{R}}
\def\Z{\mathbb{Z}}
\renewcommand{\le}{\ensuremath{\leqslant}}
\renewcommand{\ge}{\ensuremath{\geqslant}}
\newcommand{\introo}[2]{{\left]{#1,\,#2\,}\right[\kern1pt}}
\newcommand{\intrfo}[2]{{\left[{#1,\,#2}\right[\kern1pt}}
\begin{document}

\title[Moment conditions for random coefficient AR($\infty$)]{Moment conditions for random coefficient AR($\infty$) under non-negativity assumptions}
\author{Pascal Maillard}
\address{Institut de Mathématiques de Toulouse\\
Université Toulouse 3 Paul Sabatier\\
118 Route de Narbonne\\
31062 Toulouse Cedex 9\\
France}
\thanks{PM is affiliated to Institut de Mathématiques de Toulouse (IMT), Université de Toulouse, CNRS UMR5219 and Institut Universitaire de France. Supported in part
by grants ANR-20-CE92-0010-01 (REMECO project) and ANR-11-LABX-0040 (ANR program “Investissements d’Avenir”).}
\email{firstname.lastname@math.univ-toulouse.fr}
\urladdr{https://www.math.univ-toulouse.fr/~pmaillar}
\author{Olivier Wintenberger}
\address{LPSM\\
Sorbonne Université\\
4 place Jussieu\\
75005 Paris\\
France}
\address{
Wolfgang Pauli Institute\\
Oskar-Morgenstern-Platz 1\\
A-1090 Wien \\
Austria}
\thanks{OW acknowledges support of the French Agence
Nationale de la Recherche (ANR) under reference ANR20-CE40-0025-01 (T-REX project). }
\email{firstname.lastname@sorbonne-universite.fr}
\urladdr{http://wintenberger.fr}
\date{July 14, 2022}
\subjclass{Primary: 60G70, Secondary: 39A50, 60F25, 60G10, 60H25, 62M10}
\keywords{random coefficient autoregressive model, stochastic recurrence equations, heavy tails, power-law tails, second moment method}

\begin{abstract}
    We consider random coefficient autoregressive models of infinite order (AR($\infty$)) under the assumption of non-negativity of the coefficients. We develop novel methods yielding sufficient or necessary conditions for finiteness of moments, based on combinatorial expressions of first and second moments. The methods based on first moments recover previous sufficient conditions by \cite{DW} in our setting. The second moment method provides in particular a necessary and sufficient condition which is different, but shown to be equivalent, to the classical criterion of \cite{Nicholls1982} in the case of finite order random coefficient autoregressive models. We further illustrate our results through two examples.
\end{abstract}

\maketitle


{Let $(A_j)_{j\ge1}$ be a sequence of non-negative random variables and $B$ a non-negative random variable. We allow for arbitrary dependencies among them.}
Consider the recurrence 
\begin{equation}
\label{eq:AR_infty}
X_t = \sum_{j=1}^{\infty}A_{t,j} X_{t-j}+B_t\,,\qquad t\in \Z\,,
\end{equation}
{where $((A_{t,j})_{ j\ge 1}, B_t)_{t\in\Z}$ is an iid sequence with generic element $((A_j)_{j\ge1},B)$.} In the case where there exists $p\in\N$ such that $A_j = 0$ for all $j>p$, this recurrence is known in the literature under the name \emph{random coefficient autoregressive model of $p$-th order}, symbolically $AR(p)$ in \cite[Section 4.4.9]{BDM}, or also \emph{random difference equation of $p$-th order} in \cite{Kesten1973}. It is natural to call the general recurrence the \emph{random coefficient autoregressive model of infinite order}, symbolically $AR(\infty)$. 
\par
Using a backward iteration starting from zero (\cite{DF}), one can construct a non-anticipative stationary solution of \eqref{eq:AR_infty}, whose {marginal distribution is the law of the following random variable:}
\begin{equation}
\label{eq:def_Xtilde2}
\tilde X \coloneqq \sum_{0=t_0<t_1< \cdots < t_n,\ n\ge 0} \tilde A_{t_0,t_1-t_0} \cdots \tilde A_{t_{n-1},t_n-t_{n-1}}B_{-t_n}\,,
\end{equation}
where $\tilde{A}_{t,j} \coloneqq A_{-t,j}$, $t\in\Z$, $j\ge 1$. By non-negativity, this quantity is always well-defined but may be infinite. The aim of this paper is to find explicit necessary and sufficient conditions for the existence of moments of $\tilde X$. We do not address here the question {of} whether the solution to \eqref{eq:AR_infty} thus constructed is unique and refer to \cite{DW} for results in this direction. As usual we will always assume in the sequel that the random variables $B_t$ satisfy $\P(B_t=0)<1$, $t\in \Z$, to avoid any degeneracy.


\subsection*{\texorpdfstring{The finite order case $AR(p)$, $p<\infty$}{The finite order case}} Let us recall what is known in the finite order case, following \cite{Kesten1973} and the recent book by \cite{BDM}.   When there exists $p\in\N$ such that $A_{1,j} = 0$ for all $j>p$, the recursion \eqref{eq:AR_infty} turns into the finite order $AR(p)$ equation
\[
X_t = A_{t,1} X_{t-1}+\cdots+A_{t,p} X_{t-p}+B_t\,,\qquad t\in \Z\,.
\]
One typically {rephrases} it as a random difference equation of first order on $p\times p$ matrices: Define
\[
\boldsymbol A_t := 
\begin{pmatrix}
A_{t,1} & A_{t,2} & \cdots & A_{t,p}\\
1 &  &  & \\
 & \ddots & &\\
 & & 1 &
\end{pmatrix},
\qquad
\boldsymbol B_t :=
\begin{pmatrix}
B_t\\
0 \\
\vdots\\
0
\end{pmatrix},
\qquad
\boldsymbol X_t :=
\begin{pmatrix}
X_t\\
\vdots\\
X_{t-p+1}
\end{pmatrix},
\]
with all non-appearing entries equal to zero in the definition of $\boldsymbol A_t$. {We also denote by $\boldsymbol A$ the matrix defined correspondingly with $(A_1,\ldots,A_p)$ instead of $(A_{t,1},\ldots, A_{t,p})$ and denote $\boldsymbol B$ accordingly.} Then \eqref{eq:AR_infty} is equivalent to
\begin{equation}
\label{eq:AR_matrix}
\boldsymbol X_t = \boldsymbol A_t \boldsymbol X_{t-1} + \boldsymbol B_t.
\end{equation}
This equation has been studied by many authors starting from \cite{Kesten1973}. Let us recall how it is typically solved. First, if one seeks stationary solutions of \eqref{eq:AR_infty} one needs to assume that the top Lyapunov exponent of the matrix $\boldsymbol A$ is negative, i.e.,
\begin{equation}\label{eq:toplyapunov}
\lim_{n\to\infty} \frac 1 n \log \|\boldsymbol A_n \cdots \boldsymbol A_1\| < 0,\quad \text{a.s.},
\end{equation}
where we can of course choose $\|\cdot\|$ to be any norm.
Next, one defines the following function \cite[(4.4.35)]{BDM}:
\begin{equation}\label{eq:toplyapunov2} 
h(\theta) = \lim_{n\to\infty} \frac1n\log \E\left[\|\boldsymbol A_n\cdots \boldsymbol A_1\|^\theta\right] 
\end{equation}
(existence of the limit follows from a submultiplicativity argument). The tail behavior of the stationary solution \eqref{eq:def_Xtilde2} depends on the form of this function. Most of the literature works under the assumptions where there exist $0<\alpha<\alpha'$, such that $h(\alpha) = 0$, $h(\alpha') < \infty$ and $\E[B^{\alpha'}]<\infty$. In this case, and under a further non-lattice assumption, there exists a stationary solution {with generic element $\boldsymbol X$ satisfying the distributional equation  
\begin{equation}
\label{eq:X=AX+B}
\boldsymbol X \stackrel{d}{=}\boldsymbol A\boldsymbol X +\boldsymbol B,
\end{equation}
with $(\boldsymbol A,\boldsymbol B)$ independent of $\boldsymbol X$. Furthermore, $\|\boldsymbol X\|$} is power law tailed of order $\alpha$, i.e. there exists $C>0$ such that\footnote{We use the notation $f(x)\sim g(x)$, $x\to \infty$ to mean that the two real-valued functions $f$ and $g$ satisfy the relation $\lim_{x\to \infty}f(x)/g(x)=1$.}
\[
\P(\|\boldsymbol X\| > x) \sim C\,x^{-\alpha},\qquad x\to\infty\,,
\] 
see \cite{BDM}. One easily deduces the moment properties $\E[\tilde X^\theta]<\infty$ for $\theta<\alpha$ and $\E[\tilde X^\theta]=\infty$ otherwise. In other words, using the convexity of the function $h$, the condition $h(\theta)<0$ is necessary and sufficient for the existence of moments of order $\theta>0$ for the $AR(p)$ model. The aim of the paper is to extend such necessary and sufficient conditions when $p=\infty$.

\par 

{The root equation $h(\alpha)=0$ is impossible to solve {explicitly} for general stochastic matrices $\boldsymbol A$. Even the simpler top-Lyapunov condition \eqref{eq:toplyapunov} rarely turns into an explicit condition on the law of the random vector $(A_{1},\ldots,A_{p})$.}
The specificity of the matrix $\boldsymbol A$ in the $AR(p)$ model allows \cite{Nicholls1982} 
to provide an explicit sufficient condition for the existence of a stationary solution with finite second moment, in the case of centered coefficients. Denoting by $\rho$ the spectral radius and $\otimes$  the tensor product, the sufficient condition of \eqref{eq:toplyapunov} is 
\begin{equation}\label{eq:nqcond}
\rho(\E[\boldsymbol A \otimes \boldsymbol A])  < 1\,.
\end{equation}
\cite[Corollary 2.2.2]{Nicholls1982} also show that this assumption is necessary and sufficient for having finite moments of order $2$ when $B$ has finite moments of order $2$ and is independent of $\boldsymbol A$. Thus the condition $\rho(\E[\boldsymbol A \otimes \boldsymbol A])  < 1$ is equivalent to the condition $h(2)<0$  in the case of centered coefficients $B_{t}$. \cite{tuan1986mixing} extended the equivalence to a more general setting including non-negative coefficients as considered here. In this article, we are interested in similarly explicit conditions when $p=\infty$, in the case of non-negative coefficients.


\subsection*{\texorpdfstring{Back to $AR(\infty)$}{Back to AR(infty)}}

Very little is known in the infinite-order case $p=\infty$, when the recursion \eqref{eq:AR_infty} is satisfied with $\P(A_{t,j}>0)>0$ for infinitely many $j\ge 1$. In the language of autoregressive processes, this model is denoted by $AR(\infty)$ and is an example of an autoregressive process of ``infinite memory''. \cite{DW} studied non-linear processes with infinite memory, which includes the $AR(\infty)$ as a special case and are defined by
\[
X_t=F(X_{t-1},X_{t-2},\ldots;\xi_t)\,,\qquad t\in \Z\,,
\]
where the iid process $(\xi_t)$ can be taken equal to $((A_{t,j})_j, B_t)$. They gave sufficient conditions for finite moments of order $\theta$ using a coupling approach in $\L^\theta$, $\theta\ge 1$. 
In particular, for the model \eqref{eq:AR_infty}, the contraction condition (3.2) in \cite{DW}, for $\theta \ge 1$, turns into the condition
\[
\tilde \phi_1(\theta) < 0,
\]
where 
\begin{equation}
\label{eq:phi_1_tilde}
\tilde \phi_1(\theta):=\log \sum_{j=1}^\infty \E\big[ A_{j}^{\theta}\big]^{1/\theta}\,,\qquad \theta> 0\,.
\end{equation}
Thus, if $\tilde\phi_1(\theta)<0$ and $\E[B^\theta]<\infty$ for some $\theta \ge 1$, Theorem~3.1  in \cite{DW} ensures the existence of a stationary solution $X$ admitting finite moments of order $\theta$.    {We are not aware of any other explicit condition appearing in the literature for the general $AR(\infty)$ model.} 

\subsection*{Relation with the smoothing transform}

{Equation \eqref{eq:X=AX+B}, or rather, its restriction to the first coordinate,} looks similar to an equation known in the literature as the \emph{fixed point equation of the (non-homogeneous) smoothing transform}. This is the following distributional equation {(the unknown being the law of the random variable $Y$)}
\begin{align}
\label{eq:smoothing_transform}
Y \eqd \sum_{j=1}^\infty A_{j} Y^{(j)} + B,
\end{align}
 {where $(Y^{(j)})_{j\ge1}$ are iid copies of the non-negative random variable $Y$, independent of $((A_j)_{j\ge1},B)$.} Define
\begin{align}
\label{eq:phi_1}
\phi_1(\theta):= \log \sum_{j=1}^\infty  \E\big[A_{j}^{\theta}\big]\,,\qquad \theta>0\,.
\end{align}
It is known that a solution to \eqref{eq:smoothing_transform} exists if $\phi_1(0) > 0$ and for some $\theta\in(0,1]$,  $\E[B^\theta]<\infty$ and $\phi_1(\theta) < 0$. Furthermore, this criterion is close to being optimal \cite[Section 5.2.4]{BDM}. In this case, if there exists $\alpha\ge \theta$ such that $\phi_1(\alpha) = 0$ and some non-arithmeticity condition is met, then $\P(Y > x)\sim C x^{-\alpha}$ as $x\to\infty$, for some constant $C\in (0,\infty)$. Note that the characteristic exponent $\alpha$ does not depend on the dependencies between the $A_{j}$'s, only on the marginal laws.

\subsection*{Description of our results} {Our results provide moment properties for the non-negative random variable $\tilde X$ defined in \eqref{eq:def_Xtilde2} under conditions involving moments of the (non-negative) random coefficients $A_{t,j}$ or their generic elements $A_j$, $j\ge 1$. Recall that $\tilde X$ is distributed as the (marginal distribution of the) stationary solution of the recurrence \eqref{eq:AR_infty}.}
Our first result is an extension of the above-mentioned sufficiency condition from \cite{DW} to the case $\theta \le 1$. More precisely we prove that $\phi_1(\theta)<0$ is a sufficient condition for $\E[\tilde X^\theta]<\infty$ when $0<\theta\le 1$. Then we show that $\phi_1(\theta)<0$ is a necessary condition for $\E[\tilde X^\theta]<\infty$ when $\theta\ge 1$. Finally we also show that $\tilde\phi_1(\theta)<0$ is a necessary condition for $\E[\tilde X^\theta]<\infty$ when $0<\theta<1$. Since the functions $\phi_1$ and $\tilde \phi_1$ coincide at $\theta=1$, we deduce that $\phi_1(1)=\tilde \phi_1(1)<0$ is a necessary and sufficient condition for the existence of moments of order $1$ when $\E[B]<\infty$. We thus obtain necessary conditions as well as sufficient conditions for any $\theta>0$, and these are asymptotically sharp when $\theta$ is close to one. The special role of $\theta=1$ is not surprising in view of the linearity of the model and of the expectation. 
\par 
Our main contributions are necessary conditions and sufficient conditions of moments that are better than the previous ones for $\theta \ge 2$ and for some values of $\theta$ in the interval $[1,2]$. Furthermore, they are asymptotically sharp when $\theta$ is close to $2$. 
To do so we introduce in Section~\ref{sec:secondmom} two other functions $\phi_2$ and $\tilde \phi_2$ that have the following properties (we omit some extra conditions below to simplify the presentation): 
\begin{itemize}
    \item for $0<\theta<2$ the conditions $ \phi_2(\theta)<0$ and $\tilde\phi_2(\theta)<0$ are sufficient and necessary, respectively, for the existence of moments $\E[\tilde X^\theta]<\infty$,
    \item for $\theta>2$ the conditions $ \phi_2(\theta)<0$ and $\tilde\phi_2(\theta)<0$ are necessary and sufficient, respectively, for the existence of moments $\E[\tilde X^\theta]<\infty$,
    \item for $\theta=2$ the condition $ \phi_2(\theta)=\tilde\phi_2(\theta)<0$ is necessary and sufficient  for the existence of moments $\E[\tilde X^2]<\infty$.
\end{itemize}
The statement above is only a partial summary omitting extra conditions in some cases. In particular, conditions on the functions $\phi_1$ and $\tilde\phi_1$ are required for ensuring the existence of moments; see Theorem \ref{th:second_moment} for a precise statement. {The functions $\phi_1$, $\tilde \phi_1$, $\phi_2$ and $\tilde \phi_2$ do not depend on the law of the non-negative random variables $B_t$. In fact, our sufficient or necessary conditions for finiteness of moments of $\tilde X$ apply both in situations when $\tilde X$ has a heavier tail than $B_t$ (under moment conditions on $B_t$) and in situations when the moment properties of $\tilde X$ and $B_t$ are similar.} In the latter case, one could also apply Theorem 3.1 of \cite{hult2008tail} to obtain the precise tail properties of $\tilde X$ under moments conditions on $((A_{t,j})_{ j\ge 1})_{t\in \Z}$ and regular variation of the tail of $B_t$.

We furthermore verify that {the necessary and sufficient condition $ \phi_2(\theta)=\tilde\phi_2(\theta)<0$} coincides with the classical one \eqref{eq:nqcond} of \cite{Nicholls1982} in the case $\theta=2$ when $p<\infty$, although it is of a very different form. Indeed our approach uses linearity together with combinatorics on pairs instead of a matrix representation as the one of \cite{Nicholls1982}. The equivalence stated in Theorem \ref{th:comparison} extends the classical necessary and sufficient condition \eqref{eq:nqcond} to the stationary solution of the AR($p$) model \eqref{eq:AR_matrix}.

We mention that our approach also covers models of the form
\begin{equation}
\label{eq:AR_infty2}
X_t = \sum_{j=1}^{\infty}A_{t-j,j} X_{t-j}+B_{t-1}\,,\qquad t\in \Z\,,
\end{equation}
{with $((A_{t,j})_{j\ge 1},B_t)_{t\in\Z}$ defined as above, assuming that the $B_t$'s are independent of the $A_{t,j}$'s.}
The non-anticipative solution of \eqref{eq:AR_infty2}  is a {\em predictable} process with respect to the natural filtration 
\[
\mathscr F_t := \sigma(((A_{s,j})_{j\ge 1}, B_s); s\le t).
\]
The volatility process $(\sigma_t^2)$ of a GARCH(1,1) model $\vep_t=Z_t \sigma_t$ for iid $(Z_t)$ is a typical example. It satisfies the recursion 
\[
\sigma_t^2=\omega + \alpha_1 \vep_{t-1}^2+\beta_1 \sigma_{t-1}^2\,,\qquad t\in \Z\,.
\]
After expanding, this turns into a  recursion of infinite order of the form \eqref{eq:AR_infty2}
\[
\sigma_t^2=\dfrac{\omega}{1-\beta_1}+\alpha_1\sum_{j=1}^\infty \beta_1^{j-1}Z_{t-j}^2  \sigma_{t-j}^2\,,\qquad t\in\Z\,.
\]
We check that for the volatility process of a GARCH(1,1) model our conditions provide necessary and sufficient conditions of finite order moments. In particular we recover the optimal second-order moment condition of the GARCH(1,1) volatility using our approach. Notice that infinite memory models (AR and ARCH) are necessary for writing the invertible form (depending only on the past observations $\varepsilon_{t-1},\varepsilon_{t-2},\ldots$) of some finite memory models such as ARMA or GARCH.

There is a natural extension of some our methods to higher moments of order $k\ge 3$. However, while this easily yields necessary criteria for the finiteness of moments, we were not able to obtain simple sufficient criteria in that case. See Section~\ref{sec:higher-moments} for details.

Finally, notice also that sufficient conditions for finiteness of moments for some non-linear models (such as Galton-Watson process with immigration, see \cite{basrak2013heavy}) can be deduced from Theorem \ref{th:second_moment} using a stochastic domination argument similar as in \cite{DW}. 


%
%

\section{First moment -- comparison to smoothing transform}
\label{sec:first_moment}

{As in the introduction, let $(A_j)_{j\ge1}$ be a sequence of non-negative random variables and $B$ a non-negative random variable. We allow for arbitrary dependencies among them (we only require that $B$ is independent of the family $(A_j)_{j\ge1}$ under assumption \ref{A2} below).}
 In the remainder of the paper, we let $(X_t)_{t\ge0}$ be the solution to the equation \eqref{eq:AR_infty} starting from zero, i.e.
\begin{equation}
\label{eq:AR_infty_simple}
\begin{cases}
X_t = \sum_{j=1}^{\infty}A_{t,j} X_{t-j}+  B_t,\quad t\ge0 \\
X_{-1} = X_{-2} = \cdots = 0,
\end{cases}
\end{equation}
where the family $((A_{t,j})_{j\ge1},B_t)_{t\in\Z}$ satisfies one of the following two assumptions:
\begin{enumerate}[label=\textbf{(A\arabic*)}]
\item \label{A1} $((A_{t,j})_{j\ge1},B_t)_{t\in\Z}$ are iid copies of the random sequence $((A_j)_{j\ge1},B)$,\quad or
\item \label{A2} {$((A_{t+j,j})_{j\ge1})_{t\in\Z}$ are iid copies of the random sequence $(A_j)_{j\ge1}$. Furthermore, $B$ is independent of the family $(A_j)_{j\ge1}$.}
\end{enumerate}
{Under \ref{A2}, we set $A'_{t,j} \coloneqq A_{t+j,j}$ for any $t\in\Z$ and $j\ge1$, so that $A_{t,j} = A'_{t-j,j}$ for any $t\in\Z$ and $j\ge1$.
Note that under \ref{A1} or \ref{A2},} the sequence of random vectors $((A_{t,j})_{j\ge1})_{t\in\Z}$ is stationary in $t$, i.e.~translation-invariant in law. Under \ref{A1} it is also reversible, i.e.~invariant in law under the change of index $t\mapsto -t$, however, under \ref{A2} it is not in general. In order to express the distribution of the solution of \eqref{eq:AR_infty_simple} using backward iterates, we therefore define
\begin{equation}
\label{eq:Atilde_def}
\tilde{A}_{t,j} \coloneqq A_{-t,j},\qquad t\in\Z,\ j\ge 1.
\end{equation}
\begin{remark}
\label{rem:independence}
Given $t,t'\in\Z$ and $j,j'\ge 1$, one readily checks that $\tilde{A}_{t,j}$ and $\tilde{A}_{t',j'}$ are independent if
\begin{itemize}
\item $t\ne t'$ (under  \ref{A1}), or
\item $t+j\ne t'+j'$ (under  \ref{A2}),
\end{itemize}
with obvious generalizations for larger collections of r.v. Moreover, $\tilde{A}_{t,j}$ and $B_{-t'}$ are independent if $t\ne t'$ under \ref{A1} and for all $t,t'$ under \ref{A2}.
\end{remark}

The following result is the main result of this section.
\begin{theorem}\ \label{th:first_moment}
Let $X_t$ be defined by \eqref{eq:AR_infty_simple} and let $\phi_1$ and $\tilde \phi_1$ be defined in \eqref{eq:phi_1} and \eqref{eq:phi_1_tilde}.
\begin{enumerate}
\item As $t\to\infty$, $X_t$ converges in law to a (possibly infinite) limit $X$.
\item \label{firstassump} Assume $\phi_1(\theta) < 0$ and $\E[B^\theta]<\infty$ for some $\theta\in(0,1]$. Then $\E[X^\theta] < \infty$.
\item Assume $\phi_1(\theta) \ge 0$ or $\E[B^\theta]=\infty$ for some $\theta \ge 1$. Then $\E[X^{\theta}] = \infty$.
\item Assume $\tilde \phi_1(\theta) <0$ and $\E[B^\theta]<\infty$ for some $\theta \ge 1$. Then $\E[X^{\theta}] < \infty$.
\item Assume $\tilde \phi_1(\theta) \ge 0$ or $\E[B^\theta]=\infty$ for some $0<\theta < 1$. Then $\E[X^{\theta}] = \infty$.
\end{enumerate}
\end{theorem}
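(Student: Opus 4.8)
The plan is to reduce every assertion to a single geometric series built from the explicit backward representation \eqref{eq:def_Xtilde2}. First I would record that the zero-initialised solution of \eqref{eq:AR_infty_simple} satisfies $X_t \eqd \tilde X_t$, where $\tilde X_t$ denotes the truncation of \eqref{eq:def_Xtilde2} to paths whose endpoint satisfies $t_n\le t$; this follows from the backward iteration of \cite{DF} together with the stationarity of $((A_{t,j})_{j\ge1})_{t\in\Z}$ and the definition \eqref{eq:Atilde_def}. Since all summands are non-negative, $\tilde X_t$ is non-decreasing in $t$ with $\tilde X_t\uparrow \tilde X$ almost surely, so monotone convergence yields at once the convergence in law $X_t\cid X:=\tilde X$ claimed in (1) and the identity $\E[X^\theta]=\lim_{t\to\infty}\E[X_t^\theta]=\E[\tilde X^\theta]$ in $[0,\infty]$, which I will use in the remaining parts.

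For $\theta>0$ and $Y\ge0$ write $\|Y\|_\theta:=\E[Y^\theta]^{1/\theta}$. The engine is the following bookkeeping: a path is a chain $0=t_0<\cdots<t_n$ with increments $j_i=t_i-t_{i-1}\ge1$ and weight $w=\tilde A_{t_0,j_1}\cdots\tilde A_{t_{n-1},j_n}B_{-t_n}$. By Remark \ref{rem:independence} the factors of a single weight $w$ are independent under both \ref{A1} and \ref{A2} (under \ref{A1} the sources $t_{i-1}$ are distinct, under \ref{A2} the sums $t_{i-1}+j_i=t_i$ are distinct, and $B_{-t_n}$ is independent of the $\tilde A$'s in either case), so $\E[w^\theta]$ factorises. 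Summing the resulting products over all increments collapses into a power of a series, giving the two master identities
\[
\sum_{\text{paths of length }n}\E[w^\theta]=\E[B^\theta]\,e^{n\phi_1(\theta)},\qquad \sum_{\text{paths of length }n}\|w\|_\theta=\|B\|_\theta\,e^{n\tilde\phi_1(\theta)},
\]
with $\phi_1,\tilde\phi_1$ as in \eqref{eq:phi_1}, \eqref{eq:phi_1_tilde}. Parts (2)--(4) now drop out by choosing the right elementary inequality and summing a geometric series, which is finite exactly when its exponent is negative. For (2), subadditivity of $x\mapsto x^\theta$ on $[0,\infty)$ for $0<\theta\le1$ gives $\E[\tilde X^\theta]\le\sum_{\text{paths}}\E[w^\theta]=\E[B^\theta]\sum_{n\ge0}e^{n\phi_1(\theta)}<\infty$. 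For (3), superadditivity of $x\mapsto x^\theta$ for $\theta\ge1$ reverses this into $\E[\tilde X^\theta]\ge\E[B^\theta]\sum_{n\ge0}e^{n\phi_1(\theta)}$, which is infinite once $\phi_1(\theta)\ge0$. For (4), Minkowski's inequality for $\theta\ge1$ gives $\|\tilde X\|_\theta\le\sum_{\text{paths}}\|w\|_\theta=\|B\|_\theta\sum_{n\ge0}e^{n\tilde\phi_1(\theta)}<\infty$.

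The main obstacle, and the only genuinely new input, is part (5): the necessary condition in terms of $\tilde\phi_1$ for $0<\theta<1$. Here I would invoke the reverse Minkowski inequality, $\|\sum_k f_k\|_\theta\ge\sum_k\|f_k\|_\theta$ for $0<\theta<1$ and arbitrary non-negative $f_k$. The point is that this inequality requires no independence among the terms, which is indispensable because distinct paths share edges and are therefore highly dependent; independence enters only through the within-path factorisation already used for the master identities. Applying it to $\tilde X=\sum_{\text{paths}}w$ yields $\|\tilde X\|_\theta\ge\sum_{\text{paths}}\|w\|_\theta=\|B\|_\theta\sum_{n\ge0}e^{n\tilde\phi_1(\theta)}$, which diverges as soon as $\tilde\phi_1(\theta)\ge0$, using $\E[B^\theta]>0$ (guaranteed by $\P(B=0)<1$). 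The sub-cases $\E[B^\theta]=\infty$ in (3) and (5) are immediate from $\tilde X\ge B_0\eqd B$, the single $n=0$ path. What remains is routine: all quantities are non-negative, so Tonelli and monotone convergence on finite path-truncations justify every interchange of expectation and summation, and the factorisation of $\E[w^\theta]$ must be checked verbatim under \ref{A2} as well as \ref{A1}, which is precisely what Remark \ref{rem:independence} provides.
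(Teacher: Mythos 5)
Your proposal is correct and follows essentially the same route as the paper's own proof: the backward representation with monotone convergence for part (1), subadditivity/superadditivity of $x\mapsto x^\theta$ for the $\phi_1$ statements, Minkowski and reverse Minkowski for the $\tilde\phi_1$ statements, with the within-path factorisation from Remark~\ref{rem:independence} collapsing each case to the same geometric series.
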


\begin{remark}\label{rem:min}
Consider the case $\theta=1$. By definition, $\phi_1(1) = \tilde\phi_1(1)$. 
By Theorem~\ref{th:first_moment}, if $\E[B]<\infty$, then $\phi_1(1)=\tilde \phi_1(1)<0$ is a necessary and sufficient condition for $\E[X]<\infty$. This condition rewrites explicitly as 
\begin{equation*}
\sum_{j=1}^\infty \E[A_j]<1 
\end{equation*}
and does not depend on the dependence among the $A_j$.
\end{remark}
\begin{proof}
We begin by proving 1.
Unravelling the recurrence \eqref{eq:AR_infty_simple}, we get 
\begin{align*}
X_t 
&= \sum_{0\le t_0<t_1< \cdots < t_n = t,\ n\ge 0} A_{t_n,t_n-t_{n-1}} \cdots A_{t_1,t_1-t_0}B_{t_0}\\
&= \sum_{0=t_0<t_1< \cdots < t_n \le t,\ n\ge 0} A_{t-t_0,t_1-t_0} \cdots A_{t-t_{n-1},t_n-t_{n-1}}B_{t-t_n}\,.
\end{align*}
Using stationarity in $t$ and \eqref{eq:Atilde_def}, we get
\begin{align*}
X_t &\eqd \sum_{0=t_0<t_1< \cdots < t_n \le t,\ n\ge 0} \tilde A_{t_0,t_1-t_0} \cdots \tilde A_{t_{n-1},t_n-t_{n-1}}B_{-t_n}\\
&\eqqcolon \tilde X_t.
\end{align*}
By non-negativity of the $A$'s, the sequence $\tilde X_t$ is {non-decreasing} in $t$ and therefore converges almost surely as $t\to\infty$ to the (possibly infinite) limit {$\tilde X$ defined in \eqref{eq:def_Xtilde2}, namely,}
\begin{equation*}
\tilde X = \sum_{0=t_0<t_1< \cdots < t_n,\ n\ge 0} \tilde A_{t_0,t_1-t_0} \cdots \tilde A_{t_{n-1},t_n-t_{n-1}}B_{-t_n}.
\end{equation*}
Since $X_t\eqd \tilde X_t$ for every $t$, this shows that $X_t$ converges in law to a (possibly infinite) limit $X$. Furthermore, $X\eqd \tilde X$.

We now prove 2. showing that with $\theta$ from the assumption, i.e.~$\theta\in(0,1]$ and $\phi_1(\theta) < 0$, $\E[X^\theta] = \E[{\tilde X}^\theta] < \infty$ (note that this implies in particular that $X$ is finite almost surely).   By subadditivity of the function $x\mapsto x^\theta$, we have
\begin{align*}
\E[{\tilde X}^\theta] 
&\le \sum_{0=t_0<t_1< \cdots < t_n,\ n\ge 0} \E[(\tilde A_{t_0,t_1-t_0} \cdots \tilde A_{t_{n-1},t_n-t_{n-1}}B_{-t_n})^\theta].
\end{align*}
By independence (see Remark~\ref{rem:independence}),
\begin{align*}
\E[{\tilde X}^\theta] 
&\le \sum_{0=t_0<t_1< \cdots < t_n,\ n\ge 0} \E[A_{t_1-t_0}^\theta] \cdots \E[A_{t_n-t_{n-1}}^\theta]\E[B^\theta]\\
&= \sum_{i_1,\ldots,i_n\ge 1,\,n\ge 0} \E[A_{i_1}^\theta] \cdots \E[A_{i_n}^\theta]\E[B^\theta]\\
&= \E[B^\theta]\sum_{n=0}^\infty \left(\sum_{i=1}^\infty \E[A_i^\theta]\right)^n\\
&= \E[B^\theta]\sum_{n=0}^\infty e^{n\phi_1(\theta)}\\
&< \infty,
\end{align*}
since $\phi_1(\theta) < 0$ by hypothesis. This finishes the proof of 2.

The proof of 3. is similar. Let $\theta \ge 1$ and $\phi_1(\theta) \ge 0$ or $\E[B^\theta]=\infty$. By superadditivity of the function $x\mapsto x^{\theta}$, we have, similarly as above,
\begin{align*}
\E[{\tilde X}^{\theta}] &\ge  \E[B^\theta]\sum_{n=0}^\infty e^{n\phi_1(\theta)} = \infty.
\end{align*}

The assertion 4. follows from \cite{DW} in the cases where \ref{A1} is satisfied. We provide here an alternative proof that also works under \ref{A2}. Using the Minkowski inequality ($\theta\ge1$) and by independence (see Remark~\ref{rem:independence}),
\begin{align*}
\E[{\tilde X}^\theta]^{1/\theta} 
&\le \sum_{0=t_0<t_1< \cdots < t_n,\ n\ge 0} \E[(A_{t_1-t_0} \cdots  A_{t_n-t_{n-1}}B_{-t_n})^\theta]^{1/\theta}\\
&= \sum_{0=t_0<t_1< \cdots < t_n,\ n\ge 0} \E[A_{t_1-t_0}^\theta]^{1/\theta} \cdots  \E[A_{t_n-t_{n-1}}^\theta]^{1/\theta}\E[B^\theta]^{1/\theta}\\
&= \sum_{i_1,\ldots,i_n\ge 1,\,n\ge 0} \E[A_{i_1}^\theta]^{1/\theta} \cdots \E[A_{i_n}^\theta]^{1/\theta}\E[B^\theta]^{1/\theta}\\
&= \E[B^\theta]^{1/\theta}\sum_{n=0}^\infty \left(\sum_{i=1}^\infty \E[A_i^\theta]^{1/\theta}\right)^n\\
&= \E[B^\theta]^{1/\theta}\sum_{n=0}^\infty e^{n\tilde \phi_1(\theta)}\\
&< \infty,
\end{align*}
since $\tilde \phi_1(\theta) < 0$ by hypothesis.

The proof of 5. is similar to the preceding one, replacing the Minkowski inequality with the reverse Minkowski inequality holding for $0<\theta\le 1$ and non-negative random variables:
\begin{align*}
\E[{\tilde X}^\theta]^{1/\theta} 
&\ge \sum_{0=t_0<t_1< \cdots < t_n,\ n\ge 0} \E[(A_{t_1-t_0} \cdots  A_{t_n-t_{n-1}}B_{-t_n})^\theta]^{1/\theta}\,.
\end{align*}
The proof of point~5.\ then follows along the same lines as the proof of point~4.
\end{proof}

\section{Second moment -- a combinatorial formula}\label{sec:secondmom}

From this section on, we let $X$ be the random variable from the statement of Theorem~\ref{th:first_moment} and $\tilde X$ the random variable from \eqref{eq:def_Xtilde2}.
We now calculate the second moment $\E[X^2]$. For this, it is useful to introduce some notation, {which allows us to express the second moment and related quantities in a compact way}. Define the set of finite increasing integer-valued sequences starting at zero:
\[
\mathscr T = \left\{\boldsymbol t = (t_0,\ldots,t_n): n\ge 0,\ 0=t_0 < t_1 <\cdots < t_n\right\}.
\]
The trivial sequence is denoted by $\boldsymbol 0 = (0)$. For $\boldsymbol t = (t_0,\ldots,t_n) \in \mathscr T$, we write $n(\boldsymbol t) = n$ and $\tilde A_{\boldsymbol t} = \tilde A_{t_0,t_1-t_0} \cdots \tilde A_{t_{n-1},t_n-t_{n-1}}$, with $\tilde A_{\boldsymbol 0} = 1$ by convention. We also denote $\tilde B_{\boldsymbol t}:=B_{-t_n}$. With this notation, we have
\begin{equation}
\label{eq:Xtilde_new_notation}
\tilde X =   \sum_{\boldsymbol t  \in \mathscr T} \tilde A_{\boldsymbol t} \tilde B_{\boldsymbol t},
\end{equation}
and hence
\begin{align}
\label{eq:Xtilde_squared}
{\tilde X}^2 = \sum_{\boldsymbol s,\boldsymbol t  \in \mathscr T} \tilde A_{\boldsymbol s}\tilde B_{\boldsymbol s}\tilde A_{\boldsymbol t}\tilde B_{\boldsymbol t}.
\end{align}

Define the concatenation of a finite number of sequences $\boldsymbol t^1,\ldots,\boldsymbol t^k$ by
\[
\boldsymbol t^1\cdots \boldsymbol t^k \coloneqq (t^1_0,\ldots,t^1_{n(\boldsymbol t^1)},t^1_{n(\boldsymbol t^1)}+ t^2_1,\ldots,t^1_{n(\boldsymbol t^1)}+t^2_{n(\boldsymbol t^2)},\ldots,t^1_{n(\boldsymbol t^1)}+\cdots +t^k_{n(\boldsymbol t^k)}).
\]
Now define the following two sets of pairs of elements in $\mathscr T$:
\begin{align*}
\mathscr C &= \{(\boldsymbol s,\boldsymbol t)\in \mathscr T\times \mathscr T: n(\boldsymbol s)>0,\,n(\boldsymbol t) > 0,\,s_{n(\boldsymbol s)} = t_{n(\boldsymbol t)},\\
&\quad \quad s_i \ne t_j,\ 0<i<n(\boldsymbol s),\,0 < j < n(\boldsymbol t)\}\\
\mathscr O &= \{(\boldsymbol s,\boldsymbol t)\in \mathscr T\times \mathscr T: \forall 0<i\le n(\boldsymbol s),\,0 < j \le n(\boldsymbol t): s_i \ne t_j \}.
\end{align*}
We call the pairs in $\mathscr C$ and $\mathscr O$ \emph{closed} and \emph{open}, respectively. Note that the trivial pair is open by definition: $(\boldsymbol 0,\boldsymbol 0) \in \mathscr O$. It is easy to see that any pair $(\boldsymbol s,\boldsymbol t)\in \mathscr T\times \mathscr T$ can be written as a concatenation of a finite number of closed pairs and an open pair: for every $(\boldsymbol s,\boldsymbol t)\in \mathscr T\times \mathscr T$ there exists $k\in\N_0$, $(\boldsymbol s^1,\boldsymbol t^1),\ldots,(\boldsymbol s^k,\boldsymbol t^k)\in\mathscr C$ and $(\boldsymbol s^o, \boldsymbol t^o)\in \mathscr O$, such that
\[
(\boldsymbol s,\boldsymbol t) = (\boldsymbol s^1\cdots \boldsymbol s^k\boldsymbol s^o,\boldsymbol t^1\cdots \boldsymbol t^k\boldsymbol t^o).
\]
This corresponds to splitting the pair $(\boldsymbol s,\boldsymbol t)$ at the points where the two sequences intersect, see Figure~\ref{fig:decomposition} for an illustration. Since closed pairs only intersect at the start and end points and open pairs only at the starting point, this decomposition is unique.

\begin{figure}[ht]
\centering
\includegraphics[scale=1]{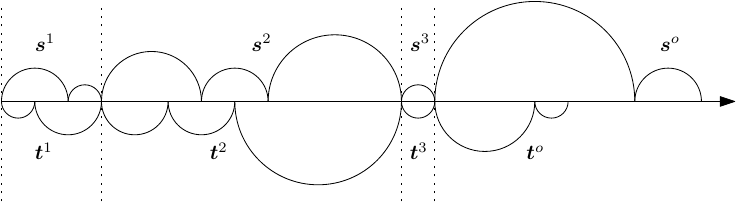}
\caption{Schematic illustration of the decomposition of a pair $(\boldsymbol s,\boldsymbol t)\in \mathscr T\times \mathscr T$ into finitely many (here, three) closed pairs $(\boldsymbol s^1,\boldsymbol t^1)$, $(\boldsymbol s^2,\boldsymbol t^2)$, $(\boldsymbol s^3,\boldsymbol t^3)$ and an open pair $(\boldsymbol s^o,\boldsymbol t^o)$.}
\label{fig:decomposition}
\end{figure}

In the same spirit as the functions $\phi_1$ and $\tilde \phi_1$ from the last section, we now introduce two functions $\phi_2$ and $\tilde \phi_2$, defined as follows:
\begin{align}
\label{eq:phi_2}
\phi_2(\theta)&:=\log \sum_{(\boldsymbol s,\boldsymbol t)\in\mathscr C}\E[\tilde A_{\boldsymbol s}^{\theta/2}\tilde A_{\boldsymbol t}^{\theta/2}]\,,\qquad \theta>0\\
\label{eq:phi_2_tilde}
\tilde \phi_2(\theta)&:=\log \sum_{(\boldsymbol s,\boldsymbol t)\in\mathscr C}\E[\tilde A_{\boldsymbol s}^{\theta/2}\tilde A_{\boldsymbol t}^{\theta/2}]^{2/\theta}\,,\qquad \theta>0\,.
\end{align}
Note that we have $\phi_2(\theta) \ge \phi_1(\theta)$  for all $\theta >0$, because the set $\mathscr C$ contains the closed pairs $((0,i),(0,i))$ for each $i\ge1$ and summing over these pairs only yields $\phi_1(\theta)$. 

The following theorem is the main result of this article:

\begin{theorem} \label{th:second_moment}
Let $\theta >0$. Let $\phi_1$, $\tilde \phi_1$, $\phi_2$, $\tilde \phi_2$ be defined as in \eqref{eq:phi_1}, \eqref{eq:phi_1_tilde}, \eqref{eq:phi_2}, \eqref{eq:phi_2_tilde}, respectively, let $X$ be defined as in Theorem~\ref{th:first_moment}.
\begin{enumerate}
\item Assume that either $\theta \ge 2$ and $\phi_2(\theta) \ge 0$, or $\theta \le 2$ and $\tilde \phi_2(\theta) \ge 0$. Then $\E[X^\theta] = \infty$.
\item Assume the following:
\begin{enumerate}
\item either: $\theta \le 2$, $\phi_1(\theta/2)<0$ and $\phi_2(\theta)<0$,\\
or: $\theta \ge 2$, $\tilde \phi_1(\theta/2)<0$ and $\tilde \phi_2(\theta)<0$,
\item $\E\left[\sum_{i,j\ge 1}\E[A_i^{\theta/2}A_j^{\theta/2}]^{\min(1,2/\theta)}\right] < \infty$ (only under \ref{A1}),
\item $\E[B^\theta]<\infty$.
\end{enumerate}
Then $\E[X^\theta] < \infty$.
\end{enumerate}
\end{theorem}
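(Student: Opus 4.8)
The plan is to work throughout with the representation $\E[X^\theta]=\E[\tilde X^\theta]=\E[(\tilde X^2)^{\theta/2}]$ from Theorem~\ref{th:first_moment}, expanding $\tilde X^2=\sum_{\boldsymbol s,\boldsymbol t\in\mathscr T}\tilde A_{\boldsymbol s}\tilde B_{\boldsymbol s}\tilde A_{\boldsymbol t}\tilde B_{\boldsymbol t}$ and exploiting the unique decomposition of a pair $(\boldsymbol s,\boldsymbol t)$ into $k\ge0$ closed blocks and one open block. The structural fact I would establish first is a \emph{block-factorization}: in $(\boldsymbol s,\boldsymbol t)=(\boldsymbol s^1\cdots\boldsymbol s^k\boldsymbol s^o,\boldsymbol t^1\cdots\boldsymbol t^k\boldsymbol t^o)$ consecutive blocks are separated by the common intersection points, so distinct blocks occupy disjoint sets of the ``addresses'' that govern independence in Remark~\ref{rem:independence} (the index $t$ under \ref{A1}, the index $t+j$ under \ref{A2}). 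Combined with that remark and the stationarity of $(\tilde A_{t,j})_{t}$, this gives
\[
\E[\tilde A_{\boldsymbol s}^{\theta/2}\tilde A_{\boldsymbol t}^{\theta/2}]=\Big(\prod_{m=1}^k\E[\tilde A_{\boldsymbol s^m}^{\theta/2}\tilde A_{\boldsymbol t^m}^{\theta/2}]\Big)\,\E[\tilde A_{\boldsymbol s^o}^{\theta/2}\tilde A_{\boldsymbol t^o}^{\theta/2}],
\]
each block being replaced, via translation-invariance, by an unshifted representative in $\mathscr C$ resp.\ $\mathscr O$. The two factors $\tilde B_{\boldsymbol s},\tilde B_{\boldsymbol t}$ sit at the endpoints of $\boldsymbol s,\boldsymbol t$, which belong to the open block; they coincide exactly when that block is trivial (contributing $B^\theta$) and are otherwise at two distinct endpoints, mutually independent and independent of every $\tilde A$.

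For the necessity statement~1.\ I would keep only the pairs with trivial open block, i.e.\ pure concatenations of $k$ closed pairs, for which the two endpoints coincide and the $B$-factor equals $B^\theta$. When $\theta\ge2$, superadditivity of $x\mapsto x^{\theta/2}$ yields $\E[\tilde X^\theta]\ge\sum_{(\boldsymbol s,\boldsymbol t)}\E[(\tilde A_{\boldsymbol s}\tilde B_{\boldsymbol s}\tilde A_{\boldsymbol t}\tilde B_{\boldsymbol t})^{\theta/2}]$; restricting to closed concatenations and using the factorization bounds this below by $\E[B^\theta]\sum_{k\ge0}e^{k\phi_2(\theta)}$, infinite once $\phi_2(\theta)\ge0$ (note $\E[B^\theta]>0$ since $\P(B=0)<1$). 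When $\theta\le2$ the same restriction together with the reverse Minkowski inequality for the $L^{\theta/2}$ quasi-norm on non-negative variables gives $\E[\tilde X^\theta]^{2/\theta}\ge\E[B^\theta]^{2/\theta}\sum_{k\ge0}e^{k\tilde\phi_2(\theta)}$, infinite once $\tilde\phi_2(\theta)\ge0$.

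The sufficiency statement~2.\ is dual. For $\theta\le2$ I would apply subadditivity of $x\mapsto x^{\theta/2}$ to get $\E[\tilde X^\theta]\le\sum_{(\boldsymbol s,\boldsymbol t)}\E[(\tilde A_{\boldsymbol s}\tilde B_{\boldsymbol s}\tilde A_{\boldsymbol t}\tilde B_{\boldsymbol t})^{\theta/2}]$, then decompose into blocks and factorize, writing the total as $\big(\sum_{k\ge0}e^{k\phi_2(\theta)}\big)$ times an open-block sum; the geometric series converges because $\phi_2(\theta)<0$. For $\theta\ge2$ I would instead use Minkowski's inequality to bound $\E[\tilde X^\theta]^{2/\theta}\le\sum_{(\boldsymbol s,\boldsymbol t)}\E[(\cdots)^{\theta/2}]^{2/\theta}$, where the factorization now produces the convergent factor $\sum_{k\ge0}e^{k\tilde\phi_2(\theta)}$ since $\tilde\phi_2(\theta)<0$. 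In both regimes it remains to bound the open-block sum, which is exactly where hypotheses (a)--(c) enter: the single-sequence tail sums over $\boldsymbol s^o$ and $\boldsymbol t^o$ converge precisely because $\phi_1(\theta/2)<0$ (resp.\ $\tilde\phi_1(\theta/2)<0$), the endpoint factors are controlled by $\E[B^\theta]<\infty$ (hence $\E[B^{\theta/2}]<\infty$ by Lyapunov), and under \ref{A1} the two sequences of an open pair share only the starting address, so their first steps $A_{s_1^o},A_{t_1^o}$ are dependent and the first-step sum is $\sum_{i,j}\E[A_i^{\theta/2}A_j^{\theta/2}]^{\min(1,2/\theta)}$, finite by (b); under \ref{A2} the open condition forces $s_1^o\ne t_1^o$, so these first steps are independent and no analogue of (b) is needed.

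The main obstacle I anticipate is the rigorous justification of the block-factorization and the attendant address bookkeeping: one must check that consecutive blocks really use disjoint address sets --- the common point being the right-endpoint of one block and the left-endpoint of the next, and serving as an address in only one of them depending on whether \ref{A1} or \ref{A2} holds --- that translation-invariance legitimately replaces each shifted block by a representative of $\mathscr C$ or $\mathscr O$, and, most delicately, that for the open-block bound one may pass from the constrained sum over $\mathscr O$ to the unconstrained product form as a valid upper bound of non-negative factorized terms. Correctly placing the $B$-factors relative to the decomposition and treating the trivial and half-trivial open blocks are further, more routine, points of care.
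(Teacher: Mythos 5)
Your proposal is correct and follows essentially the same route as the paper: the same closed/open block decomposition and factorization identity, sub/superadditivity and (reverse) Minkowski to handle $\theta\lessgtr 2$, and the same treatment of the open-block sum via disjoint addresses under \ref{A2} and a first-jump decomposition under \ref{A1} (the paper merely organizes this by first proving $\theta=2$ and reducing the other cases to it via the auxiliary model with coefficients $A^{\theta/2}$, $B^{\theta/2}$). The obstacles you flag --- address bookkeeping for the factorization and the trivial/half-trivial open blocks --- are exactly the points the paper's proof spells out.
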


\begin{proof}
We start with proving the first statement (showing that $\E[X^\theta] = \infty$).
We first consider the case $\theta = 2$ and assume that $\phi_2(2) = \tilde \phi_2(2) > 0$. We wish to show that $\E[X^2] = \E[\tilde X^2] = \infty$. By the decomposition of arbitrary pairs $(\boldsymbol s,\boldsymbol t)$ into a concatenation of closed pairs and an open pair, {we obtain from \eqref{eq:Xtilde_squared} the following expression:}
\begin{align}
\label{eq:tilde_X_square}
{\tilde X}^2 &= \sum_{k=0}^\infty \sum_{(\boldsymbol s^1,\boldsymbol t^1),\ldots,(\boldsymbol s^k,\boldsymbol t^k)\in\mathscr C} \sum_{(\boldsymbol s^o, \boldsymbol t^o)\in \mathscr O} \tilde A_{\boldsymbol s^1\cdots \boldsymbol s^k\boldsymbol s^o}\tilde B_{ \boldsymbol s^1\cdots \boldsymbol s^k\boldsymbol s^o}\tilde A_{\boldsymbol t^1\cdots \boldsymbol t^k\boldsymbol t^o} \tilde B_{ \boldsymbol t^1\cdots \boldsymbol t^k\boldsymbol t^o}.
\end{align}

We now claim the following:
\begin{multline}\label{eq:identity}
\E[\tilde A_{\boldsymbol s^1\cdots \boldsymbol s^k\boldsymbol s^o}\tilde B_{ \boldsymbol s^1\cdots \boldsymbol s^k\boldsymbol s^o}\tilde A_{\boldsymbol t^1\cdots \boldsymbol t^k\boldsymbol t^o} \tilde B_{ \boldsymbol t^1\cdots \boldsymbol t^k\boldsymbol t^o}]\\
= \E[\tilde A_{\boldsymbol s^1}\tilde A_{\boldsymbol t^1}]\cdots \E[\tilde A_{\boldsymbol s^k}\tilde A_{\boldsymbol t^k}]\E[\tilde A_{\boldsymbol s^o} \tilde B_{ \boldsymbol s^o}\tilde A_{\boldsymbol t^o} \tilde B_{  \boldsymbol t^o}].
\end{multline}
To see this, set $\tau_0 = 0$ and for $i=1,\ldots,k$:
\[
\tau_i \coloneqq s^1_{n(s^1)} + \cdots + s^i_{n(s^i)} = t^1_{n(t^1)} + \cdots + t^i_{n(t^i)},
\]
where the equality comes from the definition of closed pairs. In Figure~\ref{fig:decomposition}, the $\tau_i$ correspond to the abscissae of the dotted lines. We now write
\begin{multline*}
\tilde A_{\boldsymbol s^1\cdots \boldsymbol s^k\boldsymbol s^o}\tilde A_{\boldsymbol t^1\cdots \boldsymbol t^k\boldsymbol t^o} = \Pi_1\cdots\Pi_k\Pi_o,\quad \text{where}\\
\Pi_i = \frac{\tilde A_{\boldsymbol s^1\cdots \boldsymbol s^i}\tilde A_{\boldsymbol t^1\cdots \boldsymbol t^i}}{\tilde A_{\boldsymbol s^1\cdots \boldsymbol s^{i-1}}\tilde A_{\boldsymbol t^1\cdots \boldsymbol t^{i-1}}},\quad i=1,\ldots,k,\\ \text{and}\quad \Pi_o = \frac{\tilde A_{\boldsymbol s^1\cdots \boldsymbol s^k \boldsymbol s^o}\tilde B_{ \boldsymbol s^1\cdots \boldsymbol s^k\boldsymbol s^o}\tilde A_{\boldsymbol t^1\cdots \boldsymbol t^k \boldsymbol t^o}\tilde B_{ \boldsymbol t^1\cdots \boldsymbol s^k\boldsymbol t^o}}{\tilde A_{\boldsymbol s^1\cdots \boldsymbol s^k}\tilde A_{\boldsymbol t^1\cdots \boldsymbol t^k}}\,.
\end{multline*}
Now note that for every $i=1,\ldots,k$, $\Pi_i$ is a product of terms $\tilde A_{t,j}$ with
\begin{itemize}
\item $t\in \{\tau_{i-1},\ldots,\tau_{i}-1\}$, and 
\item $t+j \in \{\tau_{i-1}+1,\ldots,\tau_i\}.$
\end{itemize}
Furthermore, $\Pi_o$ is a product of terms $\tilde A_{t,j}$ and $B_{-t}$ with
\begin{itemize}
\item $t\in \{\tau_{k},\tau_k+1,\ldots\}$, and 
\item $t+j \in \{\tau_k+1,\tau_k+2,\ldots\}.$
\end{itemize}
Hence, under either  \ref{A1} or \ref{A2}, the products $\Pi_1,\ldots,\Pi_k,\Pi_o$ are independent, see Remark~\ref{rem:independence}. Furthermore, by stationarity of the sequence $((\tilde A_{t,j})_{j\ge 1})_{t\ge 0}$, we have $\E[\Pi_i] = \E[\tilde A_{\boldsymbol s^i}\tilde A_{\boldsymbol t^i}]$ for all $i=1,\ldots,k$ and $\E[\Pi_o] = \E[\tilde A_{\boldsymbol s^o}\tilde B_{\boldsymbol s^o}\tilde A_{\boldsymbol t^o}\tilde B_{\boldsymbol t^o}]$. This yields \eqref{eq:identity}.

%
%

Equations \eqref{eq:tilde_X_square} and \eqref{eq:identity} together now give 
\begin{align}
\label{eq:second_moment}
\E[X^2] = \left(\sum_{(\boldsymbol s,\boldsymbol t)\in\mathscr O}\E[\tilde A_{\boldsymbol s}\tilde B_{\boldsymbol s}\tilde A_{\boldsymbol t}\tilde B_{\boldsymbol t}]\right) \sum_{k=0}^\infty \left(\sum_{(\boldsymbol s,\boldsymbol t)\in\mathscr C}\E[\tilde A_{\boldsymbol s}\tilde A_{\boldsymbol t}]\right)^k .
\end{align}

Noting that
\[
\sum_{(\boldsymbol s,\boldsymbol t)\in\mathscr O}\E[\tilde A_{\boldsymbol s}\tilde B_{\boldsymbol s}\tilde A_{\boldsymbol t}\tilde B_{\boldsymbol t}] \ge \E[\tilde A_{\boldsymbol 0}\tilde B_{\boldsymbol 0}\tilde A_{\boldsymbol 0}\tilde B_{\boldsymbol 0}] = \E[B^2] > 0,
\]
and
\[
\sum_{(\boldsymbol s,\boldsymbol t)\in\mathscr C}\E[\tilde A_{\boldsymbol s}\tilde A_{\boldsymbol t}] = e^{\phi_2(2)} = e^{\tilde \phi_2(2)} \ge 1,
\]
we get from \eqref{eq:second_moment} that $\E[X^2] = \infty$. This proves the first statement of the theorem in the case $\theta = 2$.

Now assume $\theta \ge 2$ and $\phi_2(\theta) \ge 0$. We use superadditivity of the function $x\mapsto x^{\theta/2}$ and {\eqref{eq:Xtilde_new_notation}} to bound
\begin{align*}
\E[X^\theta] \ge \E\left[\left(\sum_{\boldsymbol t\in\mathscr T} \tilde A_{\boldsymbol t}^{\theta/2}\tilde B_{\boldsymbol t}^{\theta/2}\right)^2\right],
\end{align*}
and then use the case $\theta = 2$ with $A_{t,j}$ and $B_j$ replaced by $A_{t,j}^{\theta/2}$ and $B_j^{\theta/2}$ for all $t,j$.

Now assume that $\theta \le 2$ and $\tilde \phi_2(\theta) \ge 0$. Similarly to the proof of \eqref{eq:second_moment}, but using moreover the reverse Minkowski's inequality, we get

\begin{align}
\label{eq:second_moment_theta}
&\quad\E[X^\theta]^{2/\theta} = \E[(X^2)^{\theta/2}]^{2/\theta}\\
\nonumber
&\ge \left(\sum_{(\boldsymbol s,\boldsymbol t)\in\mathscr O}\E[\tilde A_{\boldsymbol s}^{\theta/2}\tilde B_{\boldsymbol s}^{\theta/2}\tilde A_{\boldsymbol t}^{\theta/2}\tilde B_{\boldsymbol t}^{\theta/2}]^{2/\theta}\right) \sum_{k=0}^\infty \left(\sum_{(\boldsymbol s,\boldsymbol t)\in\mathscr C}\E[\tilde A_{\boldsymbol s}^{\theta/2}\tilde A_{\boldsymbol t}^{\theta/2}]^{2/\theta}\right)^k\\
\nonumber
&\ge \E[B^\theta]^{2/\theta} \sum_{k=0}^\infty \left(e^{\tilde \phi_2(\theta)}\right)^k\\
\nonumber
&= \infty.
\end{align}
This yields the first statement {of the theorem} in the case $\theta \le 2$ and thus finishes the proof of the first statement. 





We now turn to the proof of the second statement (finiteness of $\E[X^\theta]$). Again, we first consider the case $\theta = 2$ (note that $\phi_1(1) = \tilde \phi_1(1)$ and $\phi_2(2) = \tilde \phi_2(2)$ by definition). By assumption (a), the geometric series in \eqref{eq:second_moment} is finite. It suffices to show that $\sum_{(\boldsymbol s,\boldsymbol t)\in\mathscr O}\E[\tilde A_{\boldsymbol s}\tilde B_{\boldsymbol s}\tilde A_{\boldsymbol t}B_{\boldsymbol t}]$ is finite as well.

We separately consider the cases \ref{A1} and \ref{A2}. We start with the simpler case \ref{A2}. Let $(\boldsymbol s,\boldsymbol t)\in\mathscr O$. By definition, $\tilde A_{\boldsymbol s}$ is a product of terms $\tilde A_{t,j}$ with $t+j \in S\coloneqq \{s_1,\ldots,s_{n(\boldsymbol s)}\}$. Similarly, $\tilde A_{\boldsymbol t}$ is a product of terms $\tilde A_{t,j}$ with $t+j \in T\coloneqq \{t_1,\ldots,t_{n(\boldsymbol t)}\}$. By definition of $\mathscr O$, the sets $S$ and $T$ are disjoint. Hence, under assumption \ref{A2}, $\tilde A_{\boldsymbol s}$ and $\tilde A_{\boldsymbol t}$ are independent, see Remark~\ref{rem:independence}. Thus, using the non-negativity of the $A$'s and the independence assumptions,
\begin{align*}
\sum_{(\boldsymbol s,\boldsymbol t)\in\mathscr O}\E[\tilde A_{\boldsymbol s}\tilde B_{\boldsymbol s}\tilde A_{\boldsymbol t}\tilde B_{\boldsymbol t}] & =\E[B^2]+ \sum_{(\boldsymbol s,\boldsymbol t)\in\mathscr O\setminus ({\boldsymbol 0},{\boldsymbol 0})} \E[\tilde A_{\boldsymbol s}]\E[\tilde A_{\boldsymbol t}]\E[B]^2\\
& \le \E[B^2] + \sum_{\boldsymbol s,\boldsymbol t\in \mathscr T}  \E[\tilde A_{\boldsymbol s}]\E[\tilde A_{\boldsymbol t}]\E[B]^2\\
&=\E[B^2] + \E[X]^2.
\end{align*}
By assumptions (a) and (c), we have $\phi_1(1) < 0$ and $\E[B^2]<\infty$, hence $\E[B]<\infty$ and therefore $\E[X]<\infty$ by Theorem~\ref{th:first_moment}. Therefore, the last line in the above display is finite, which finishes the proof in the case \ref{A2}.

We now treat the more delicate case \ref{A1}. Let again $(\boldsymbol s,\boldsymbol t)\in\mathscr O$. We decompose $\boldsymbol s$ and $\boldsymbol t$ according to their first jump (if it exists): $\boldsymbol s = (i)\boldsymbol s'$ and $\boldsymbol t = (j)\boldsymbol t'$, with $i,j\in\N_0$ and $\boldsymbol s',\boldsymbol t'\in \mathscr T$, and with the short-hand notation
\[
(i) = \begin{cases}
(0,i) & \text{if $i\ge 1$}\\
(0) & \text{if $i=0$.}
\end{cases}
\]
Using \ref{A1} and the definition of $\mathscr O$, we get, setting $A_0 = \tilde A_{\boldsymbol 0} = 1$,
\[
\E[\tilde A_{\boldsymbol s}\tilde A_{\boldsymbol t}] = \E[A_iA_j]\E[\tilde A_{\boldsymbol s'}]\E[\tilde A_{\boldsymbol t'}].
\]
Furthermore, the map
\[
\begin{cases}
\mathscr O \to \N_0^2\times \mathscr T^2\\
(\boldsymbol s,\boldsymbol t) \mapsto (i,j,\boldsymbol s',\boldsymbol t')
\end{cases}
\]
is obviously injective. Hence, by non-negativity of  the $A$'s and the independence assumptions, we get
\begin{align}
\nonumber
\sum_{(\boldsymbol s,\boldsymbol t)\in\mathscr O}\E[\tilde A_{\boldsymbol s}\tilde B_{\boldsymbol s}\tilde A_{\boldsymbol t}\tilde B_{\boldsymbol t}] &\le \E[B^2]\sum_{i,j\in \N_0}\sum_{\boldsymbol s',\boldsymbol t'\in \mathscr T} \E[A_iA_j]\E[\tilde A_{\boldsymbol s'}]\E[\tilde A_{\boldsymbol t'}]\\
\nonumber
&=\E[B^2] \left(\sum_{i,j\in \N_0} \E[A_iA_j]\right)\left(\sum_{\boldsymbol t\in \mathscr T} \E[\tilde A_{\boldsymbol t}]\right)^2\\
\label{eq:bientot_fini}
&=\E[B^2] \left(1+2e^{\phi_1(1)} + \sum_{i,j\ge1} \E[A_iA_j]\right)\E[X']^2\,,
\end{align}
{where $X'=\sum_{\boldsymbol t\in \mathscr T} \tilde A_{\boldsymbol t}$ is the generic element of a solution of the equation \eqref{eq:AR_infty_simple} with $B_t=1$ a.s. for all $t\in\Z$.}
By hypothesis (c), we have $\E[B^2]<\infty$.
Using hypotheses (a) and (b), one gets finiteness of the term in brackets on the RHS of \eqref{eq:bientot_fini}. Finiteness of $\E[X']$ follows from the hypothesis $\phi_1(1)<\infty$ and Theorem~\ref{th:first_moment} (2). Altogether, this yields $\sum_{(\boldsymbol s,\boldsymbol t)\in\mathscr O}\E[\tilde A_{\boldsymbol s}\tilde B_{\boldsymbol s}\tilde A_{\boldsymbol t}\tilde B_{\boldsymbol t}] < \infty$, which was to be proven.

We now treat the case $\theta \le 2$. By subadditivity, we have
\[
X_t^{\theta/2}\le \sum_{j=1}^{\infty}A_{t,j}^{\theta/2} X_{t-j}^{\theta/2}+B_t^{\theta/2}\,,\qquad t\in \Z\,.
\]
Thus $\E[X^\theta]\le \E[(X^{(\theta)})^2]$ where $X^{(\theta)}$ is {a generic element} of the non-anticipative stationary solution of 
\[
X_t^{(\theta)}= \sum_{j=1}^{\infty}A_{t,j}^{\theta/2} X_{t-j}^{(\theta)}+B_t^{\theta/2}\,,\qquad t\in \Z\,,
\]
and the general result for $\theta\le 2$ follow from the one for $\theta=2$ considering an alternative AR model.

The case $\theta \ge2$ on the other hand is treated by following the proof for $\theta =2$, but starting from the reverse inequality of \eqref{eq:second_moment_theta}, which holds by Minkowski's inequality, instead of \eqref{eq:second_moment}.

\end{proof}

\subsection{Extension to higher moments}
\label{sec:higher-moments}

It is natural to try to extend the above methods {to obtain necessary conditions and sufficient conditions for finiteness of higher moments}. One could define for every $k\ge 2$ two sets of closed and open $k$-tuples as follows (in the definition below, we identify $\boldsymbol t = (t_0,\ldots,t_n)\in\mathcal T$ with the set $\{t_0,\ldots,t_n\}$ and note that we have $0 \in \boldsymbol t$ for every $\boldsymbol t\in\mathscr T$):
\begin{align*}
\mathscr C^{(k)} &= \{(\boldsymbol t^1,\ldots,\boldsymbol t^k)\in \mathscr T^k: \boldsymbol t^1\cap\ldots\cap \boldsymbol t^k = \{0,m\}, \\
&\qquad\text{where }m = t^1_{n(\boldsymbol t^1)} = \cdots = t^k_{n(\boldsymbol t^k)}>0 \},\\
\mathscr O^{(k)} &= \{(\boldsymbol t^1,\ldots,\boldsymbol t^k)\in \mathscr T^k: \boldsymbol t^1\cap\ldots\cap \boldsymbol t^k = \{0\} \}.
\end{align*}
One can then again uniquely write any $k$-tuple $(\boldsymbol t^1,\ldots,\boldsymbol t^k)\in\mathcal T^k$ as a concatenation of a finite number of closed tuples and one open tuple. Moreover, an analogue of \eqref{eq:second_moment} holds: Assuming that $B =1$ a.s. for simplicity,  we have

\begin{align}
\label{eq:general_moment}
\E[X^k] = \left(\sum_{(\boldsymbol t^1,\ldots,\boldsymbol t^k)\in\mathscr O^{(k)}}\E[\tilde A_{\boldsymbol t^1} \cdots \tilde A_{\boldsymbol t^k}]\right) \sum_{n=0}^\infty \left(\sum_{(\boldsymbol t^1,\ldots,\boldsymbol t^k)\in\mathscr C^{(k)}}\E[\tilde A_{\boldsymbol t^1} \cdots \tilde A_{\boldsymbol t^k}]\right)^n .
\end{align}

A natural guess for a sufficient  condition for the finiteness of $\E[X^k]$ is that $$\sum_{(\boldsymbol t^1,\ldots,\boldsymbol t^k)\in\mathscr C^{(k)}}\E[\tilde A_{\boldsymbol t^1} \cdots \tilde A_{\boldsymbol t^k}] < 1,$$ plus probably some additional conditions on moments of lower order. However, we were not able to prove this. Indeed, while it is immediate that the above condition is \emph{necessary} for the finiteness of $\E[X^k]$ (for, otherwise, the sum over $n$ on the right-hand side of \eqref{eq:general_moment} is infinite), we were not able to prove sufficiency. The problem is caused by the term on the right-hand side of \eqref{eq:general_moment} involving $\mathscr O^{(k)}$: we were not able to give simple hypotheses under which this term is finite, as it is not clear how to generalize the proof for $k=2$ to general $k$. Indeed, while in the case of $k=2$, $\tilde A_{\boldsymbol s}$ and $\tilde A_{\boldsymbol t}$ are independent for $(\boldsymbol s, \boldsymbol t)\in\mathcal O^{(2)}$ (at least under assumption $\ref{A2}$), this is not the case anymore for $k > 2$. We believe that the extension of our methods to higher moments is an interesting question for further research and hope to be able to address it in the future.

\section{Second moment -- comparison to a classical criterion}

There exists an extensive statistics literature on the random coefficient autoregressive equation \eqref{eq:AR_infty}  in the case $p<\infty$, see for instance the book of \cite{Nicholls1982}. Here we consider autoregressive equations with a non-negative noise $B_t$, $t\in \Z$, under assumption \ref{A1}, a setting included in the extension of  \cite{Nicholls1982} due to \cite{tuan1986mixing}. The classical criterion \eqref{eq:nqcond} is necessary and sufficient for the stationary solution to have finite second moment. {In this section, we show that this criterion is actually equivalent to the criterion from Theorems \ref{th:second_moment}.}

{Throughout the section, we are in the context from the beginning of Section~\ref{sec:first_moment}, i.e. we consider \eqref{eq:AR_infty_simple} with non-negative coefficients $((A_{t,j})_j,B_t)_{t\in\Z}$  and we assume that \ref{A1} holds, i.e. $((A_{t,j})_j,B_t)_{t\in\Z}$ are iid copies of $((A_j)_{j\ge 1},B)$. We further assume that the equation is of finite order $p$, i.e.
\begin{enumerate}
\item[{\bf (p)}] There exists $p<\infty$, such that $A_{j} = 0$ a.s. for all $t\in\Z$ and all $j>p$.
\end{enumerate}
For simplicity, we will furthermore assume the following:
\begin{enumerate}
\item[{\bf (+)}] $A_{j} > 0$ a.s. for all $t\in\Z$, $1\le j\le p$. 
\end{enumerate}

Recall the definition of the matrices
\begin{equation}\label{eq:companion}
\boldsymbol A_t =
\begin{pmatrix}
A_{t,1} & A_{t,2} & \cdots & \cdots  & A_{t,p}\\
1 & 0 & \cdots & \cdots &0 \\
 0& \ddots& \ddots &  &\vdots\\ 
 \vdots&  \ddots& \ddots & \ddots &\vdots\\
 0& \cdots &0 & 1 &0
\end{pmatrix},
\quad 
\boldsymbol A =
\begin{pmatrix}
A_{1} & A_{2} & \cdots & \cdots  & A_{p}\\
1 & 0 & \cdots & \cdots &0 \\
 0& \ddots& \ddots &  &\vdots\\ 
 \vdots&  \ddots& \ddots & \ddots &\vdots\\
 0& \cdots &0 & 1 &0
\end{pmatrix}.
\end{equation}
}
We denote by $\otimes$ the tensor (or Kronecker) product of matrices. Recall that $\otimes$ is bilinear and associative and that it satisfies the \emph{mixed product property} with respect to the usual matrix product:
\[
(\boldsymbol M\otimes \boldsymbol N)(\boldsymbol P\otimes \boldsymbol Q) = \boldsymbol M \boldsymbol P\otimes \boldsymbol N \boldsymbol Q.
\]
For a square matrix $\boldsymbol M$, we further denote by $\rho(\boldsymbol M)$ its spectral radius, i.e.~the largest absolute value of its eigenvalues. We furthermore denote by $\boldsymbol I$ the identity matrix of dimension $p$.

\begin{theorem}\label{th:comparison}
{Assume \ref{A1}, {\bf (p)} and {\bf (+)}.} 
Then $\sum_{(\boldsymbol s,\boldsymbol t)\in\mathscr C}\E[\tilde A_{\boldsymbol s}\tilde A_{\boldsymbol t}] < 1$ and $\sum_{j=1}^\infty \E[A_j]<1$ if and only if $\rho(\E[\boldsymbol A \otimes \boldsymbol A]) < 1$.
\end{theorem}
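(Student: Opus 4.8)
The plan is to build an explicit bridge between the combinatorial quantity $C:=\sum_{(\boldsymbol s,\boldsymbol t)\in\mathscr C}\E[\tilde A_{\boldsymbol s}\tilde A_{\boldsymbol t}]=e^{\phi_2(2)}$ and the spectral data of the $p^2\times p^2$ matrix $M:=\E[\boldsymbol A\otimes\boldsymbol A]$, through the single identity
\[
\sum_{n\ge0}(M^n)_{(1,1),(1,1)}=\sum_{k\ge0}C^k ,
\]
valid in $[0,+\infty]$. Once this is established, the theorem will follow from two elementary facts about nonnegative matrices. The identity itself rests on two ingredients: a combinatorial reading of the top-left entry of products of companion matrices, and the loop decomposition already used in the proof of Theorem~\ref{th:second_moment}.

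For the first ingredient I would read the companion matrix as a transfer operator, writing its action on basis vectors as $\boldsymbol e_j\mapsto A_{t,j}\boldsymbol e_1+\boldsymbol e_{j+1}$ (with $\boldsymbol e_{p+1}:=0$). A token starting and returning to state $1$ exactly at time $n$ then traces out a sequence $\boldsymbol t\in\mathscr T$ with $t_{n(\boldsymbol t)}=n$, the ``firing'' times being the $t_i$ and each return from state $t_i-t_{i-1}$ contributing the factor $A_{t_i,t_i-t_{i-1}}$; hence $(\boldsymbol A_n\cdots\boldsymbol A_1)_{1,1}=\sum_{\boldsymbol t\in\mathscr T:\,t_{n(\boldsymbol t)}=n}A_{\boldsymbol t}$ (increments larger than $p$ drop out by \textbf{(p)}). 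Squaring, taking expectations, and using the mixed product property with \ref{A1} give $M^n=\E[(\boldsymbol A_n\cdots\boldsymbol A_1)\otimes(\boldsymbol A_n\cdots\boldsymbol A_1)]$, whence $(M^n)_{(1,1),(1,1)}=\sum_{\boldsymbol s,\boldsymbol t:\,s_{n(\boldsymbol s)}=t_{n(\boldsymbol t)}=n}\E[A_{\boldsymbol s}A_{\boldsymbol t}]$. Reversing each sequence ending at $n$ and invoking stationarity in $t$ identifies this with the backward sum over the same index set with $A$ replaced by $\tilde A$. Summing over $n$, every pair with a common endpoint decomposes uniquely into $k\ge0$ closed pairs with trivial open part, and the pure-$\tilde A$ form of the factorisation \eqref{eq:identity}, together with Tonelli, yields the bridge.

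With the bridge in hand I would argue both directions. For $\rho(M)<1\Rightarrow(C<1\text{ and }\sum_j\E[A_j]<1)$: convergence of $\sum_n M^n=(\boldsymbol I_{p^2}-M)^{-1}$ makes the left-hand side of the identity finite, forcing $C<1$; moreover $u_n:=((\E\boldsymbol A)^n)_{1,1}=\E[(\boldsymbol A_n\cdots\boldsymbol A_1)_{1,1}]$ satisfies $u_n^2\le(M^n)_{(1,1),(1,1)}$ by Jensen, so $\sum_n u_n^2<\infty$ and $u_n\to0$; since $(u_n)$ is the renewal sequence with generating function $(1-\sum_j\E[A_j]z^j)^{-1}$, decay to zero forces $\sum_j\E[A_j]<1$. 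Conversely, if $C<1$ the identity gives $\sum_n(M^n)_{(1,1),(1,1)}=1/(1-C)<\infty$, and it remains to upgrade finiteness of this single diagonal Green's function entry to $\rho(M)<1$.

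This last upgrade is the main obstacle, and it is where \textbf{(+)} enters. I would show that $M$ is \emph{primitive}: under \textbf{(+)} the matrix $\E\boldsymbol A$ is a companion matrix with strictly positive top row and unit subdiagonal, hence irreducible, and its positive entry $(\E\boldsymbol A)_{1,1}=\E[A_1]$ provides a self-loop, so $\E\boldsymbol A$ is aperiodic, i.e.\ primitive. Since every entry of $\boldsymbol A$ is either a.s.\ zero or a.s.\ positive, $M$ has exactly the same zero pattern as $\E\boldsymbol A\otimes\E\boldsymbol A$, and because $(\boldsymbol P\otimes\boldsymbol P)^m=\boldsymbol P^m\otimes\boldsymbol P^m$ the tensor square of a primitive matrix is primitive; thus $M$ is primitive. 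Primitivity gives $(M^n)_{(1,1),(1,1)}^{1/n}\to\rho(M)$, so summability of this entry forces $\rho(M)<1$, completing the proof. A shorter but less self-contained route would bypass this Perron--Frobenius step entirely: under \ref{A1}, \textbf{(p)}, \textbf{(+)}, \textbf{(B)} both criteria characterise finiteness of $\E[X^2]$ --- the combinatorial one through \eqref{eq:second_moment} and Theorems~\ref{th:first_moment}--\ref{th:second_moment}, and $\rho(M)<1$ through the classical result of \cite{tuan1986mixing} applied to $\E[\|\boldsymbol X\|^2]=p\,\E[X^2]$ --- so the two are equivalent.
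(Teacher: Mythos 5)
Your proof is correct, but it takes a genuinely different route from the paper's. The paper argues through the random variable itself: it invokes Theorem~\ref{th:second_moment} to translate the combinatorial condition into $\E[X^2]<\infty$, identifies $\E[\tilde X^2]$ with the $(1,1)$-entry of $\boldsymbol M=\E\big[\big(\sum_n\boldsymbol A_1\cdots\boldsymbol A_n\big)\otimes\big(\sum_n\boldsymbol A_1\cdots\boldsymbol A_n\big)\big]$, and then factors $\boldsymbol M$ into a product of three matrix series before applying Perron--Frobenius and Lemma~\ref{lem:cauchyschwarz_tensor}. You instead cut out the middleman: your identity $\sum_{n\ge0}\big(\E[\boldsymbol A\otimes\boldsymbol A]^n\big)_{(1,1),(1,1)}=\sum_{k\ge0}C^k$ links the Green's function of $\E[\boldsymbol A\otimes\boldsymbol A]$ directly to the geometric series over closed pairs, via the path expansion $(\boldsymbol A_n\cdots\boldsymbol A_1)_{1,1}=\sum_{\boldsymbol t:\,t_{n(\boldsymbol t)}=n}A_{\boldsymbol t}$ and the pure-$A$ case of \eqref{eq:identity}; this is exactly the restriction of \eqref{eq:second_moment} to pairs with trivial open part, so the open-pair sum (and hence hypothesis \textbf{(B)} and most of Theorem~\ref{th:second_moment}) never enters your main argument. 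What your approach buys is a cleaner, more self-contained equivalence that isolates where each hypothesis is used: \textbf{(+)} only for primitivity of $\E[\boldsymbol A\otimes\boldsymbol A]$ (needed to upgrade summability of one diagonal Green's function entry to $\rho<1$, and correctly justified since under \textbf{(+)} every entry of $\boldsymbol A$ is a.s.\ positive or identically zero, so $\E[\boldsymbol A\otimes\boldsymbol A]$ and $\E[\boldsymbol A]\otimes\E[\boldsymbol A]$ share their zero pattern). What the paper's route buys is that it simultaneously establishes the probabilistic statement ($\E[X^2]<\infty$ iff $\rho<1$), which is the point of the section. Two small remarks: your renewal-theoretic derivation of $\sum_j\E[A_j]<1$ is fine (aperiodicity holds since all $\E[A_j]>0$ by \textbf{(+)}), but it can be replaced by the one-line bound $\rho(\E[\boldsymbol A])^2=\rho(\E[\boldsymbol A]\otimes\E[\boldsymbol A])\le\rho(\E[\boldsymbol A\otimes\boldsymbol A])$, which is the paper's Lemma~\ref{lem:cauchyschwarz_tensor}; and your proposed shortcut via the classical result of Nicholls--Quinn/Tuan would make the theorem nearly tautological, since proving that equivalence without citing it is precisely the purpose of the statement.
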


\begin{proof}
{The statement of the theorem is independent of $B$, hence we can assume with no loss of generality that $B=1$ a.s.} By Theorem \ref{th:second_moment}, we have $\sum_{(\boldsymbol s,\boldsymbol t)\in\mathscr C}\E[\tilde A_{\boldsymbol s}\tilde A_{\boldsymbol t}] < 1$ and $\sum_{j=1}^\infty \E[A_j]<1$ if and only if $\E[X^2] < \infty$ if and only if $\E[{\widetilde X}^2] < \infty$, where we recall that $\tilde X$ is defined in \eqref{eq:def_Xtilde2}. It follows from this definition that
\[
\widetilde X =  \Big(\sum_{n\ge0} \boldsymbol A_0\cdots \boldsymbol A_{-n+1}\Big)_{11} \eqd \Big(\sum_{n\ge0} \boldsymbol A_1\cdots \boldsymbol A_n\Big)_{11},
\]
where the equality in law follows from \ref{A1}. Hence, defining the matrix
\[
\boldsymbol M\coloneqq\E\left[\left(\sum_{n\ge0} \boldsymbol A_1\cdots \boldsymbol A_n\right) \otimes \left(\sum_{n\ge0} \boldsymbol A_1\cdots \boldsymbol A_n\right)\right]\,,
\]
and denoting by $(\boldsymbol M_{ij})_{i,j=1,\ldots,p^2}$ its coordinates, we have
\[
\E[{\widetilde X}^2] < \infty \iff \boldsymbol M_{11} < \infty.
\]

We now claim that $\boldsymbol M_{11}$ is finite if and only if $\rho(\E[\boldsymbol A\otimes \boldsymbol A]) < 1$. To see this, we first express the matrix $\boldsymbol M$ as follows:
\begin{align*}
\boldsymbol M&=\E\left[\left(\sum_{n\ge0} \boldsymbol A_1\cdots \boldsymbol A_n\right) \otimes \left(\sum_{n\ge0} \boldsymbol A_1\cdots \boldsymbol A_n\right)\right]\\
&= \E\left[\sum_{n,m\ge0} \boldsymbol A_1\cdots \boldsymbol A_n \otimes \boldsymbol A_1\cdots \boldsymbol A_m\right]\\
&= \E\Big[\sum_{n\ge0} (\boldsymbol A_1\otimes \boldsymbol A_1)\cdots (\boldsymbol A_n\otimes \boldsymbol A_n)\Big(\boldsymbol I\otimes \boldsymbol I\\
&\qquad+ \sum_{k\ge1} \big((\boldsymbol I \otimes \boldsymbol A_{n+1})\cdots (\boldsymbol I \otimes \boldsymbol A_{n+k})+(\boldsymbol A_{n+1} \otimes \boldsymbol I)\cdots (\boldsymbol A_{n+k} \otimes \boldsymbol I)\big) \Big)\Big]\\
&= \left(\sum_{n\ge0} \E[\boldsymbol A\otimes \boldsymbol A]^n\right) \left(\boldsymbol I\otimes \boldsymbol I + \sum_{k\ge1} \big(\E[\boldsymbol I\otimes \boldsymbol A]^k +  \E[\boldsymbol A\otimes \boldsymbol I]^k\big)\right)\\
&= \left(\sum_{n\ge0} \E[\boldsymbol A\otimes \boldsymbol A]^n\right) \left(\boldsymbol I\otimes \boldsymbol I + \boldsymbol I\otimes  \left(\sum_{k\ge1} \E[\boldsymbol A]^k\right) +  \left(\sum_{k\ge1} \E[\boldsymbol A]^k\right)\otimes \boldsymbol I\right)\,.
\end{align*}
Now, note that $\rho(\E[\boldsymbol A]) \le \sqrt{\rho(\E[\boldsymbol A\otimes \boldsymbol A])}$ by Lemma~\ref{lem:cauchyschwarz_tensor} below. Hence, if $\rho(\E[\boldsymbol A\otimes \boldsymbol A]) < 1$, all the above series are finite. Thus all the entries of $\boldsymbol M$ are finite, in particular $\boldsymbol M_{11}$.

On the other hand, suppose $\rho(\E[\boldsymbol A\otimes \boldsymbol A])\ge1$; we will show that the first series is infinite. 
Note that, almost surely, the matrix $\boldsymbol A$ is irreducible and aperiodic by assumption {\bf (+)}. Hence, $\boldsymbol A^k > 0$ entrywise for $k$ large enough, i.e.~the matrix is almost surely primitive in the language of \cite{seneta2006non}. Since $(\boldsymbol A\otimes \boldsymbol A)^k = \boldsymbol A^k\otimes \boldsymbol A^k$, it follows that $\boldsymbol A\otimes \boldsymbol A$ is almost surely primitive as well, hence $\E[\boldsymbol A\otimes\boldsymbol A]$ is primitive by non-negativity. We can therefore apply the strong version of the Perron-Frobenius theorem for primitive matrices (see Theorem 1.2 of \cite{seneta2006non})  to the  matrix $\E[\boldsymbol A\otimes \boldsymbol A]$. It follows that there exist right and left eigenvectors  $\boldsymbol u$ and $\boldsymbol v$, respectively, with positive entries and associated to the largest eigenvalue $\rho = \rho(\E[\boldsymbol A\otimes \boldsymbol A]) \ge 1$ so that
\[
 \rho^{-n}\E[\boldsymbol A\otimes \boldsymbol A]^n  = \boldsymbol u\boldsymbol v^T +o(1) \,,\qquad  n\to \infty\,.
\]
One deduces that, entrywise,
\[
\sum_{n=0}^m\E[\boldsymbol A\otimes \boldsymbol A]^n \ge \sum_{n=0}^m\rho^{-n}\E[\boldsymbol A\otimes \boldsymbol A]^n  = m\boldsymbol u\boldsymbol v^T +o(m)\,,\qquad m\to\infty\,.
\]
Since all the entries of $\boldsymbol u\boldsymbol v^T$ are positive, this yields for every $m\ge0$,
\[
\E[\tilde X^2]=\boldsymbol M_{11}\ge\left( \sum_{n=0}^m \E[\boldsymbol A\otimes \boldsymbol A]^n\right)_{11}\ge m\boldsymbol u_1\boldsymbol v_1 +o(m),
\]
which implies that $\E[\tilde X^2]=\infty$.
This proves the result.
\end{proof}

The following lemma was needed in the proof of Theorem~\ref{th:comparison} above and is included here for completeness.

\begin{lemma}
\label{lem:cauchyschwarz_tensor}
Let $\boldsymbol A$ be a random $m\times m$ companion matrix as in \eqref{eq:companion} with non-negative entries, $m\ge 1$, such that $\E[\boldsymbol A]$ is irreducible. Then,
$\rho(\E[\boldsymbol A]) \le \sqrt{\rho(\E[\boldsymbol A\otimes\boldsymbol  A])}$.
\end{lemma}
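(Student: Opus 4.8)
The plan is to relate the spectral radius of $\E[\boldsymbol A]$ to that of $\E[\boldsymbol A\otimes\boldsymbol A]$ via the Perron--Frobenius eigenvectors together with a Cauchy--Schwarz-type inequality at the level of the tensor product. Since $\E[\boldsymbol A]$ is irreducible and non-negative, Perron--Frobenius provides a right eigenvector $\boldsymbol u>0$ (entrywise) associated to the eigenvalue $\rho(\E[\boldsymbol A])$, so that $\E[\boldsymbol A]\boldsymbol u = \rho(\E[\boldsymbol A])\,\boldsymbol u$. The natural candidate to probe $\E[\boldsymbol A\otimes\boldsymbol A]$ is the tensor vector $\boldsymbol u\otimes \boldsymbol u$, which is again strictly positive.

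\emph{First} I would compute $\E[\boldsymbol A\otimes\boldsymbol A]\,(\boldsymbol u\otimes\boldsymbol u)$ entrywise and compare it to $(\E[\boldsymbol A]\boldsymbol u)\otimes(\E[\boldsymbol A]\boldsymbol u) = \rho(\E[\boldsymbol A])^2\,(\boldsymbol u\otimes \boldsymbol u)$. Writing out a coordinate of $\E[\boldsymbol A\otimes\boldsymbol A](\boldsymbol u\otimes\boldsymbol u)$ gives, for a pair of indices $(i,k)$, the quantity $\sum_{j,\ell}\E[A_{ij}A_{k\ell}]\,\boldsymbol u_j\boldsymbol u_\ell$, whereas the corresponding coordinate of $\rho(\E[\boldsymbol A])^2\,(\boldsymbol u\otimes\boldsymbol u)$ equals $\big(\sum_j\E[A_{ij}]\boldsymbol u_j\big)\big(\sum_\ell\E[A_{k\ell}]\boldsymbol u_\ell\big)$. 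The Cauchy--Schwarz inequality applied to the random vectors $(A_{ij}\boldsymbol u_j)_j$ and $(A_{k\ell}\boldsymbol u_\ell)_\ell$, more precisely to the expectation of the product, shows that the diagonal terms dominate: taking $i=k$ one gets $\E[A_{ij}A_{i\ell}]\ge \E[A_{ij}]\,\E[A_{i\ell}]$ fails in general, so instead I would apply Cauchy--Schwarz in the form $\big(\E[\sum_j A_{ij}\boldsymbol u_j]\big)^2 \le \E[(\sum_j A_{ij}\boldsymbol u_j)^2] = \sum_{j,\ell}\E[A_{ij}A_{i\ell}]\boldsymbol u_j\boldsymbol u_\ell$, which is exactly the $(i,i)$-coordinate of $\E[\boldsymbol A\otimes\boldsymbol A](\boldsymbol u\otimes\boldsymbol u)$. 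This yields the entrywise bound $\E[\boldsymbol A\otimes\boldsymbol A](\boldsymbol u\otimes\boldsymbol u)\ge \rho(\E[\boldsymbol A])^2\,(\boldsymbol u\otimes\boldsymbol u)$ on the diagonal block, and non-negativity handles the rest.

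\emph{Next}, having an entrywise inequality $\boldsymbol M\boldsymbol w\ge \lambda\,\boldsymbol w$ for a non-negative matrix $\boldsymbol M = \E[\boldsymbol A\otimes\boldsymbol A]$ and a strictly positive vector $\boldsymbol w = \boldsymbol u\otimes\boldsymbol u$ with $\lambda = \rho(\E[\boldsymbol A])^2$, I would invoke the standard Collatz--Wielandt characterization of the spectral radius of a non-negative matrix, which gives $\rho(\boldsymbol M)\ge \min_i (\boldsymbol M\boldsymbol w)_i/\boldsymbol w_i \ge \lambda$. Concretely, the inequality $\boldsymbol M\boldsymbol w\ge\lambda\boldsymbol w$ with $\boldsymbol w>0$ forces $\rho(\boldsymbol M)\ge\lambda$; this is immediate from the Perron--Frobenius theory for the (possibly reducible) non-negative matrix $\boldsymbol M$, or by iterating $\boldsymbol M^n\boldsymbol w\ge\lambda^n\boldsymbol w$ and comparing growth rates. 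Taking square roots then gives $\rho(\E[\boldsymbol A])\le\sqrt{\rho(\E[\boldsymbol A\otimes\boldsymbol A])}$, as desired.

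The main obstacle I anticipate is the careful bookkeeping in the entrywise comparison of \emph{step one}: the tensor product reshuffles indices into pairs $(i,k)$, and the Cauchy--Schwarz inequality only directly controls the ``diagonal'' pairs $i=k$, where both factors of $\boldsymbol A\otimes\boldsymbol A$ share the same realization and hence the inner expectation $\E[(\sum_j A_{ij}\boldsymbol u_j)^2]$ genuinely exceeds the square of the mean. One must verify that restricting attention to these diagonal coordinates suffices to drive the Collatz--Wielandt lower bound, using that $\boldsymbol w = \boldsymbol u\otimes\boldsymbol u$ is strictly positive and that all off-diagonal contributions are non-negative and can only help the inequality $\boldsymbol M\boldsymbol w\ge\lambda\boldsymbol w$. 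A secondary point to check is that the companion structure of $\boldsymbol A$ (with deterministic $1$'s below the first row) is compatible with the irreducibility of $\E[\boldsymbol A]$ needed to guarantee $\boldsymbol u>0$ strictly, but this is supplied by the hypothesis.
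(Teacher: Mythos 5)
Your overall strategy is the same as the paper's: probe $\E[\boldsymbol A\otimes\boldsymbol A]$ with the tensor square $\boldsymbol u\otimes\boldsymbol u$ of the positive Perron eigenvector of $\E[\boldsymbol A]$, establish the entrywise subinvariance inequality $\E[\boldsymbol A\otimes\boldsymbol A](\boldsymbol u\otimes\boldsymbol u)\ge\rho(\E[\boldsymbol A])^2(\boldsymbol u\otimes\boldsymbol u)$, and conclude by a Collatz--Wielandt (equivalently, subinvariance) argument. The $(1,1)$-coordinate is handled correctly by Jensen/Cauchy--Schwarz, and the final step from the entrywise inequality to $\rho(\E[\boldsymbol A\otimes\boldsymbol A])\ge\rho(\E[\boldsymbol A])^2$ is fine.

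The gap is in the off-diagonal coordinates. For a pair $(i,k)$ with $i\ne k$ you need
$\E\bigl[\bigl(\sum_j A_{ij}\boldsymbol u_j\bigr)\bigl(\sum_\ell A_{k\ell}\boldsymbol u_\ell\bigr)\bigr]\ge\E\bigl[\sum_j A_{ij}\boldsymbol u_j\bigr]\E\bigl[\sum_\ell A_{k\ell}\boldsymbol u_\ell\bigr]$, i.e.\ $\Cov\bigl(\sum_j A_{ij}\boldsymbol u_j,\sum_\ell A_{k\ell}\boldsymbol u_\ell\bigr)\ge 0$, and this does \emph{not} follow from non-negativity of the entries: distinct rows of a non-negative random matrix can be negatively correlated. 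Your fallback --- that the diagonal coordinates alone suffice to drive Collatz--Wielandt --- is also false, since the Collatz--Wielandt lower bound is $\min_i(\boldsymbol M\boldsymbol w)_i/\boldsymbol w_i$ over \emph{all} coordinates; a single off-diagonal coordinate violating the inequality would destroy the bound. The correct resolution, and the one the paper uses, is the companion structure you relegate to a side remark: every row of $\boldsymbol A$ except the first is a deterministic unit vector, so for any pair $(i,k)\ne(1,1)$ at least one factor $\sum_j A_{ij}\boldsymbol u_j$ is non-random, the covariance above vanishes, and the corresponding coordinate of $\E[\boldsymbol A\otimes\boldsymbol A](\boldsymbol u\otimes\boldsymbol u)$ equals $\rho(\E[\boldsymbol A])^2\boldsymbol u_i\boldsymbol u_k$ exactly; only the $(1,1)$-coordinate is genuinely quadratic in the random entries and requires Jensen. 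With that substitution your proof closes and coincides with the paper's.
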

\begin{proof}
Applying Perron-Frobenius theorem to the irreducible companion matrix $\E[\boldsymbol A]$, the spectral radius $\lambda=\rho(\E[\boldsymbol A])$ is the eigenvalue associated to the eigenvector with positive entries $\boldsymbol v=(\lambda^{m-1},\lambda^{m-2},\ldots,\lambda,1)^T$. Then $(\boldsymbol A\otimes\boldsymbol  A)(\boldsymbol v\otimes\boldsymbol  v)$ is a vector of $\R^{m^2}$ whose entries are affine functions of the entries of $\boldsymbol A$ except the first one, which is equal to$(\boldsymbol v^T\boldsymbol A_{1\cdot})^2$, where $\boldsymbol A_{1\cdot}$ denotes the first row of $\boldsymbol A$. Jensen's inequality yielding 
\[
\E[(\boldsymbol v^T\boldsymbol A_{1\cdot})^2]\ge (\boldsymbol v^T\boldsymbol \E[\boldsymbol A_{1\cdot}])^2\,,
\]
we obtain that 
\begin{align*}
\E[(\boldsymbol A\otimes\boldsymbol  A)](\boldsymbol v\otimes\boldsymbol  v)&\ge (\E[\boldsymbol A]\otimes\E[\boldsymbol  A])(\boldsymbol v\otimes\boldsymbol  v)\\
&= (\E[\boldsymbol A]\boldsymbol v\otimes\E[\boldsymbol  A]\boldsymbol v)\\
&= (\lambda \boldsymbol v\otimes\lambda \boldsymbol v)\\
&=\lambda^2 ( \boldsymbol v\otimes \boldsymbol v)\,.
\end{align*}
From the facts that the entries of the vector $( \boldsymbol v\otimes \boldsymbol v)$ are positive, that the matrix $\E[(\boldsymbol A\otimes\boldsymbol  A)]$ has non-negative entries and is irreducible, the dual of the Subinvariance Theorem (see for instance Exercise 1.17 of \cite{seneta2006non}) implies that $\lambda^2\le \rho(\E[(\boldsymbol A\otimes\boldsymbol  A)])$. The desired result follows.
\end{proof}

\section{Examples}

In this section we work out two toy examples. The special feature of these examples is that explicit necessary and sufficient conditions for finiteness of moments are available. Note that necessary conditions are rarely discussed in the literature. Two notable exceptions are \cite{douc2008existence} for the ARCH model and \cite{giraitis2004larch} for the LARCH model. The first one has positive coefficients as in the present article. Using our approach, we recover and extend the results of \cite{douc2008existence}.

\subsection{A simple example of order 2.}\label{sec:example1}

We consider the case where $A_1,A_2$ are iid copies of a r.v. $A$, $\E[A] < 1$, and $A_k=0$ for $k\ge 3$. Furthermore, we assume that condition \ref{A1} is satisfied. We wish to make explicit the criterion for finiteness of second moments from Theorem \ref{th:second_moment} and compare it to the classical criterion \eqref{eq:nqcond} from \cite[Corollary 2.2.2]{Nicholls1982}. By Theorem~\ref{th:comparison} these are equivalent.

The set of closed pairs $\mathscr C$ is very simple: if $(\boldsymbol s,\boldsymbol t)\in\mathscr C$, then either
\begin{enumerate}
\item $(\boldsymbol s,\boldsymbol t) \in \{((0,1),(0,1)), ((0,2),(0,2))\}$, or
\item there exists $k\ge1$, such that $\boldsymbol s$ starts with a jump of size 1, then $\boldsymbol t$ and $\boldsymbol s$ alternately do jumps of size 2 ($k$ times in total), followed by a last jump of size $1$ (from either $\boldsymbol s$ or $\boldsymbol t$ depending on whether $k$ is odd or even), or
\item same as the previous case, but with the roles of $\boldsymbol s$ and $\boldsymbol t$ exchanged.
\end{enumerate}
If $(\boldsymbol s,\boldsymbol t)$ is of the first type, then
\[
\E[A_{\boldsymbol s}A_{\boldsymbol t}] = \E[A^2].
\]
If $(\boldsymbol s,\boldsymbol t)$ is of the second or third type, with the corresponding $k$, then
\[
\E[A_{\boldsymbol s}A_{\boldsymbol t}] = \E[A]^{k+2}.
\]
Hence,
\[
\sum_{(\boldsymbol s,\boldsymbol t)\in\mathscr C} \E[A_{\boldsymbol s}A_{\boldsymbol t}] = 2\E[A^2] + 2 \sum_{k\ge 1} \E[A]^{k+2} = 2\left(\E[A^2] + \frac{\E[A]^3}{1-\E[A]}\right).
\]
Writing $a=\E[A]$ and $b=\E[A^2]$, this gives
\begin{align}
\sum_{(\boldsymbol s,\boldsymbol t)\in\mathscr C} \E[A_{\boldsymbol s}A_{\boldsymbol t}] \ge 1
\nonumber &\iff 2\left(b+\frac{a^3}{1-a}\right) \ge 1\\
\label{eq:cond_1}&\iff (2b-1)(1-a)+2a^3 \ge 0.
\end{align}

We now calculate $\E[\boldsymbol A\otimes\boldsymbol A]$. First, we have
\[
\boldsymbol A\otimes \boldsymbol A =
\begin{pmatrix}
A_1^2 & A_1A_2 & A_2A_1 & A_2^2\\
A_1 & 0 & A_2 & 0\\
A_1 & A_2 & 0 & 0\\
1 & 0 & 0 & 0
\end{pmatrix},
\]
and so
\[
\E[\boldsymbol A\otimes \boldsymbol A] = 
\begin{pmatrix}
b & a^2 & a^2  & b\\
a & 0 & a & 0\\
a & a & 0 & 0\\
1 & 0 & 0 & 0
\end{pmatrix}.
\]
The characteristic polynomial of this matrix is easily calculated to be
\[
\det(\E[\boldsymbol A\otimes \boldsymbol A] - X(\boldsymbol I\otimes \boldsymbol I)) = (X+a)P(X),
\]
where the polynomial $P(X)$ is defined as
$$P(X) = X^3 - (a+b)X^2+ (-2a^2-b+ab)X+ab.$$
Hence, the eigenvalues of the matrix $\E[\boldsymbol A\otimes \boldsymbol A]$ are $-a$ as well as the roots of the degree-3 polynomial $P(X)$.
Recall that $a\in (0,1)$. We have
\begin{align*}
P(-1) &= -1 - a + 2a^3 < 0\\
P(0) &= ab > 0\\
P(a) &= -2 a^4 < 0.
\end{align*}
Therefore, the smallest root of the polynomial $P$ is greater than $-1$ and, furthermore, the largest root is greater or equal than $1$ if and only if $P(1) \le 0$. Hence,
\begin{align*}
\rho(\E[\boldsymbol A\otimes \boldsymbol A]) \ge 1 \iff P(1) \le 0 \iff (2b-1)(1-a) + 2a^3 \ge 0,
\end{align*}
which is exactly \eqref{eq:cond_1}, as expected.

\subsection{A degenerate infinite memory case.}
We consider the following example motivated by the GARCH(1,1) model that also admits an ARCH($\infty$) representation. Consider the infinite memory recursion
\begin{equation}
\label{eq:GARCH}
X_t= \dfrac1{1-\beta}+ \sum_{k\ge 0 }\beta^{k+1}Z_{t-k}X_{t-1-k}\,,\qquad t\in \Z\,,
\end{equation}
where $(Z_t)_{t\in\Z}$ is an iid sequence of copies of a non-negative random variable $Z$. Setting $A_{t,j} = \beta^j Z_{t-j+1}$, $j\ge1$, $t\in \Z$,we see that \eqref{eq:GARCH} is of the form \eqref{eq:AR_infty} with this choice of $A_{t,j}$. Furthermore, assumption \ref{A2} is verified.

On the other hand, it is well-known and can easily be checked that the stationary solution of the above equation, if it exists, also satisfies the Markov equation 
\begin{equation}\label{eq:srevol}
  X_t= 1+ \beta(1+ Z_{t})X_{t-1}\,,\qquad t\in \Z\,. 
\end{equation}
As before, denote by $X$ the limit in law of $X_t$ as $t\to\infty$.
From the latter recursion, one can obtain that, for every $\theta > 0$,
\begin{equation}
\label{eq:garch_theta}
\E[X^\theta]<\infty \iff \E[\beta^\theta (1+ Z)^\theta]<1.
\end{equation}
In fact, it is known that there exists  $C>0$ such that $\P(X>x)\sim C/x^\alpha$ as $x\to\infty$ for $\alpha>0$ satisfying the equation $\E[\beta^\alpha (1+ Z)^\alpha]=1$, see \cite{BDM} for more details.
Comparing the necessary and sufficient condition \eqref{eq:garch_theta} with the conditions obtained by computing the functions $\phi_1$, $\tilde \phi_1$, $\phi_2$ and $\tilde \phi_2$ respectively defined in \eqref{eq:phi_1}, \eqref{eq:phi_1_tilde}, \eqref{eq:phi_2} and \eqref{eq:phi_2_tilde}, we thus get an explicit benchmark for our conditions.

The functions $\phi_1$ and $\tilde \phi_1$ are easily calculated from  \eqref{eq:GARCH}, which yields,
\begin{align*}
    \phi_1(\theta)&=\log\Big(\dfrac{\beta^\theta \E[Z^\theta]}{1-\beta^\theta}\Big)\,,\\
    \tilde \phi_1(\theta)&=\log\Big(\dfrac{\beta \E[Z^\theta]^{1/\theta}}{1-\beta}\Big)\,.
\end{align*}

We now calculate $\phi_2$. For $\boldsymbol s = (s_0,\ldots,s_n) \in \mathscr T$, write $Z_{\boldsymbol s} = Z_{s_1}\cdots Z_{s_n}.$ Then it is easily checked that 
\begin{align*}
\sum_{(\boldsymbol s,\boldsymbol t)\in\mathscr C} \E[\tilde A^{\theta/2}_{\boldsymbol s}\tilde A^{\theta/2}_{\boldsymbol t}]&=\sum_{(\boldsymbol s,\boldsymbol t)\in\mathscr C}\beta^{\theta/2(s_{n(\boldsymbol s)}+t_{n(\boldsymbol t)})}\E[Z^{\theta/2}_{\boldsymbol s}Z^{\theta/2}_{\boldsymbol t}].
\end{align*}
Setting
\[
\mathscr C_m = \{(\boldsymbol s,\boldsymbol t)\in\mathscr C:s_{n(\boldsymbol s)}=t_{n(\boldsymbol t)}=m\}\,,\quad m\ge1\,,
\]
the above equality is expressed as
\[
\sum_{(\boldsymbol s,\boldsymbol t)\in\mathscr C}
\E[\tilde A^{\theta/2}_{\boldsymbol s}\tilde A^{\theta/2}_{\boldsymbol t}]=\sum_{m=1}^\infty
\beta^{\theta m} \sum_{(\boldsymbol s,\boldsymbol t)\in\mathscr C_m}\E[Z^{\theta/2}_{\boldsymbol s}Z^{\theta/2}_{\boldsymbol t}].
\]
Fix $m\ge1$ and let $(\boldsymbol s,\boldsymbol t)\in\mathscr C_m$. By definition, we have
\[
 \E[Z^{\theta/2}_{\boldsymbol s}Z^{\theta/2}_{\boldsymbol t}]=\E[Z^\theta]\E[Z^{\theta/2}]^{n(\boldsymbol s)+n(\boldsymbol t)-2}.
\]
Note that the exponent $j = n(\boldsymbol s)+n(\boldsymbol t)-2$ is equal to the number of points in $\{1,\ldots,m-1\}$ which are contained in either $\boldsymbol s$ or $\boldsymbol t$. There are $\binom{m-1}{j}$ ways to choose these points and, given the choice of these points there are $2^{j}$ ways to distribute them among $\boldsymbol s$ and $\boldsymbol t$. It follows that
\begin{align*}
\sum_{(\boldsymbol s,\boldsymbol t)\in\mathscr C_m}\E[Z^{\theta/2}_{\boldsymbol s}Z^{\theta/2}_{\boldsymbol t}]
&= \E[Z^\theta] \sum_{j=0}^{m-1} \binom{m-1}j (2\E[Z^{\theta/2}])^j\\
&= \E[Z^\theta] (1+2\E[Z^{\theta/2}])^{m-1}.
\end{align*}
Summing over $m$ yields
\[
\sum_{(\boldsymbol s,\boldsymbol t)\in\mathscr C}
\E[\tilde A^{\theta/2}_{\boldsymbol s}\tilde A^{\theta/2}_{\boldsymbol t}] = \E[Z^\theta] \sum_{m=1}^\infty \beta^{\theta m} (1+2\E[Z^{\theta/2}])^{m-1}.
\]
We obtain
\[
\phi_2(\theta)=\begin{cases}
\log\Big(\dfrac{\beta^\theta \E[Z^{\theta} ]}{1-\beta^\theta(1+2\E[Z^{\theta/2} ])}\Big), &\text{if } \beta^\theta(1+2\E[Z^{\theta/2} ]) < 1\\
+\infty, & \text{otherwise.}
\end{cases}
\]
A similar argument shows that 
\[
\tilde \phi_2(\theta)=
\begin{cases}
\log\Big(\dfrac{\beta^2 \E[Z^{\theta} ]^{2/\theta}}{1-\beta^2(1+2\E[Z^{\theta/2} ]^{2/\theta})}\Big), & \text{if }\beta^2(1+2\E[Z^{\theta/2} ]^{2/\theta})<1\\
+\infty, & \text{otherwise.}
\end{cases}
\]

With the explicit expressions of $\phi_1$, $\tilde \phi_1$, $\phi_2$ and $\tilde \phi_2$ at hand, we can now compare the conditions for finiteness of $\E[X^\theta]$ provided by Theorem~\ref{th:first_moment} and Theorem~\ref{th:second_moment} with the condition \eqref{eq:garch_theta}. To do this, for every $\theta \in (0,3]$, we consider the critical value $\beta_\theta$, such that $\E[X^\theta]$ is finite for $\beta < \beta_\theta$ and infinite for $\beta > \beta_\theta$ (the existence and uniqueness of $\beta_\theta$ is easily seen by monotonicity in $\beta$ of the involved functions). Our theorems provide upper and lower bounds in the phases $\theta\in(0,1]$, $\theta\in[1,2]$ and $\theta\in[2,3]$, which are furthermore sharp for $\theta\in\{1,2\}$. In Figure \ref{fig:phis}, we compare these bounds with the exact value for $\beta_\theta$ obtained from the equation $\log(\E[\beta^\theta (1+ Z)^\theta])=0$ with $Z$ being $\chi_1^2$-distributed. One can notice that the use of second moment methods allows to greatly improve the quality of the bounds obtained by first moment methods, as soon as $\theta > 1.2$.

\begin{figure}[ht]
    \centering
    \includegraphics[height=10cm]{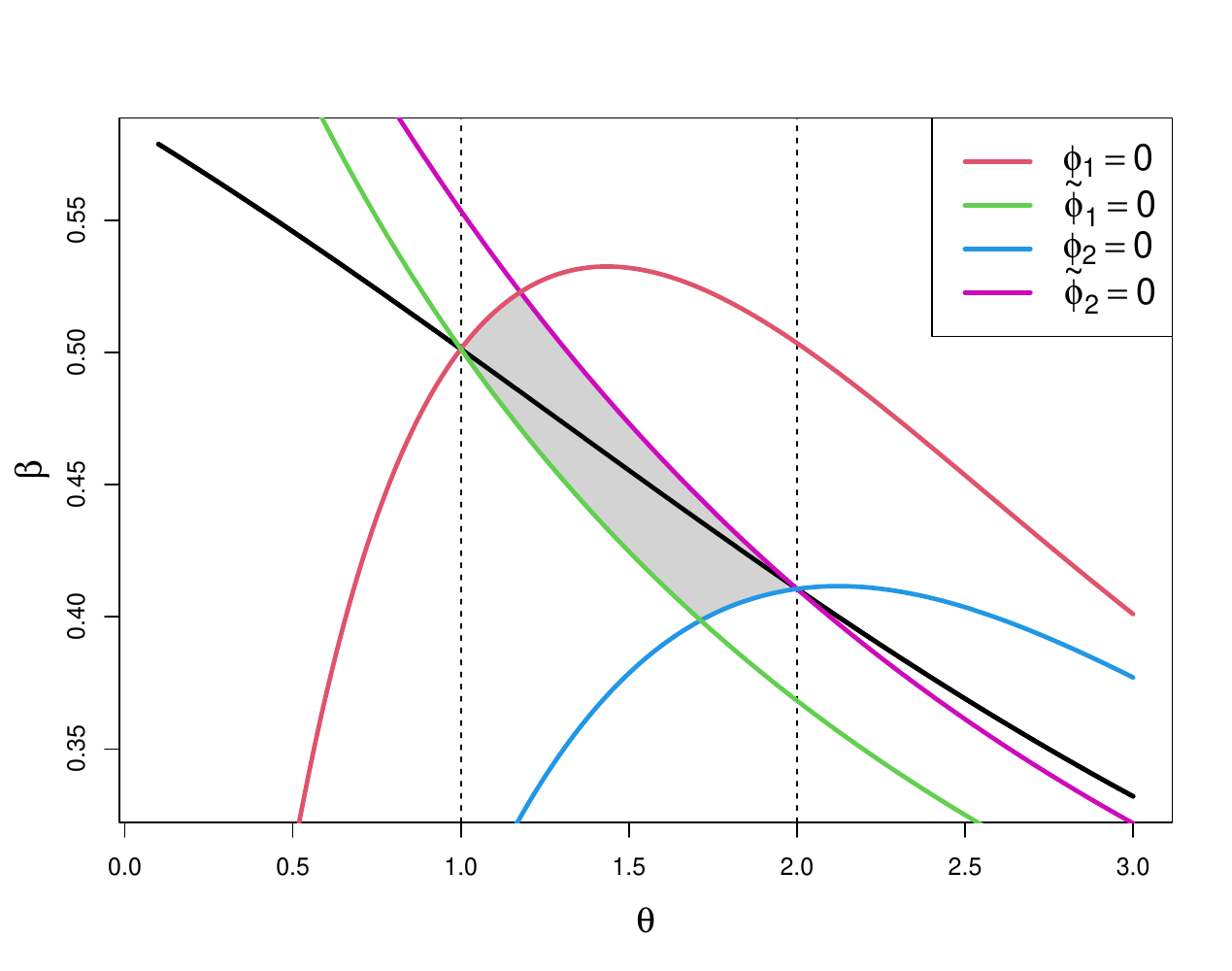}
    \caption{Illustration of our necessary and sufficient conditions of moments {applied to the marginal solution $X(\beta)$ of the AR($\infty$) model $X_t= (1-\beta)^{-1}+ \sum_{k\ge 0 }\beta^{k+1}Z_{t-k}X_{t-1-k}$, $t\in\Z$, $(Z_t)$ iid $\chi_1^2$-distributed, $0<\beta<1$, see \eqref{eq:GARCH}. The black curve corresponds to the root $\beta$ of the equation $\E[(1+\beta Z_0)^{\theta}]=1$, $\theta>0$, such that $\E[X(\beta)^\theta]<\infty$ if and only if $\beta$ lies below it}. Our sufficient conditions of moments correspond to the curves in red (for $0<\theta\le 1$), blue (for $0<\theta\le 2$, green (for $\theta\ge 1$) and purple (for $\theta\ge 2$).  Our necessary conditions of moments correspond to the curves in green (for $0<\theta\le 1$), purple (for $0<\theta\le 2$), red (for $\theta\ge 1$) and blue (for $\theta\ge 2$). {The grey aera corresponds to the pairs $(\theta,\beta)$ for which our approach cannot distinguish whether $\E[X(\beta)^\theta]$ is finite or not, $1\le \theta\le 2$.}}
    \label{fig:phis}
\end{figure}

Conditions of moments are crucial for proving the asymptotic normality of estimators of the parameters driving time series models. In particular for  GARCH models it is important to assume the sharpest condition of moments in order to apply the estimators on plausibly heavy-tailed financial time series; see \cite{francq2019garch} for a reference textbook on GARCH modeling. For instance
 \cite{bardet2009asymptotic} used second moment methods in infinite memory processes in order to prove the asymptotic normality of the Quasi-Maximum Likelihood Estimator (QMLE) of the parameters $\theta=(\omega,\alpha_1,\beta_1)$ in the GARCH(1,1) model $\vep_t=Z_t \sigma_t$ for iid $(Z_t)$ and $(\sigma_t)$ satisfying the recursion
\[
\sigma_t^2=\omega + \alpha_1 \vep_{t-1}^2+\beta_1 \sigma_{t-1}^2\,,\qquad t\in \Z\,.
\]
This equation coincides with \eqref{eq:srevol} for $X_t=\sigma_t^2/\omega$, $\beta=\beta_1$ and $Z_t=\alpha_1Z_t^2/\beta_1$. Then the necessary and sufficient condition of moments of order $2$ on $\sigma_t^2$ (and then $4$ on $\vep_t$) is
\[
\phi_2(2)=\tilde \phi_2(2)<0\Longleftrightarrow \alpha^2_1\E[Z^4]+2\alpha_1\beta_1\E[Z^2]+\beta_1^2<1\,.
\]
However the sufficient condition used in  \cite{bardet2009asymptotic} is 
\[
\phi_1(2)<0\Longleftrightarrow \alpha_1\E[Z^4]^{1/2}+\beta_1<1\,.
\]
In Figure \ref{fig:garch} we illustrate both conditions on the coefficients $(\alpha_1,\beta_1)$ for standard gaussian $Z$. Note however that the asymptotic normality of the QMLE holds under much weaker log-moments conditions than the second moment condition $\phi_2(2)=\tilde \phi_2(2)<0$ by using directly the recursion \eqref{eq:srevol} rather than \eqref{eq:GARCH}; see  \cite{francq2019garch}. The infinite memory approach is only required in more complex settings such as in \cite{bardet2022contrast} for time varying parameters GARCH(1,1) models. There, the best known condition for asymptotic normality is { the analog of $\phi_1(2)<0$} and our approach might improve on the existing literature.
\begin{figure}
    \centering
    \includegraphics[height=10cm]{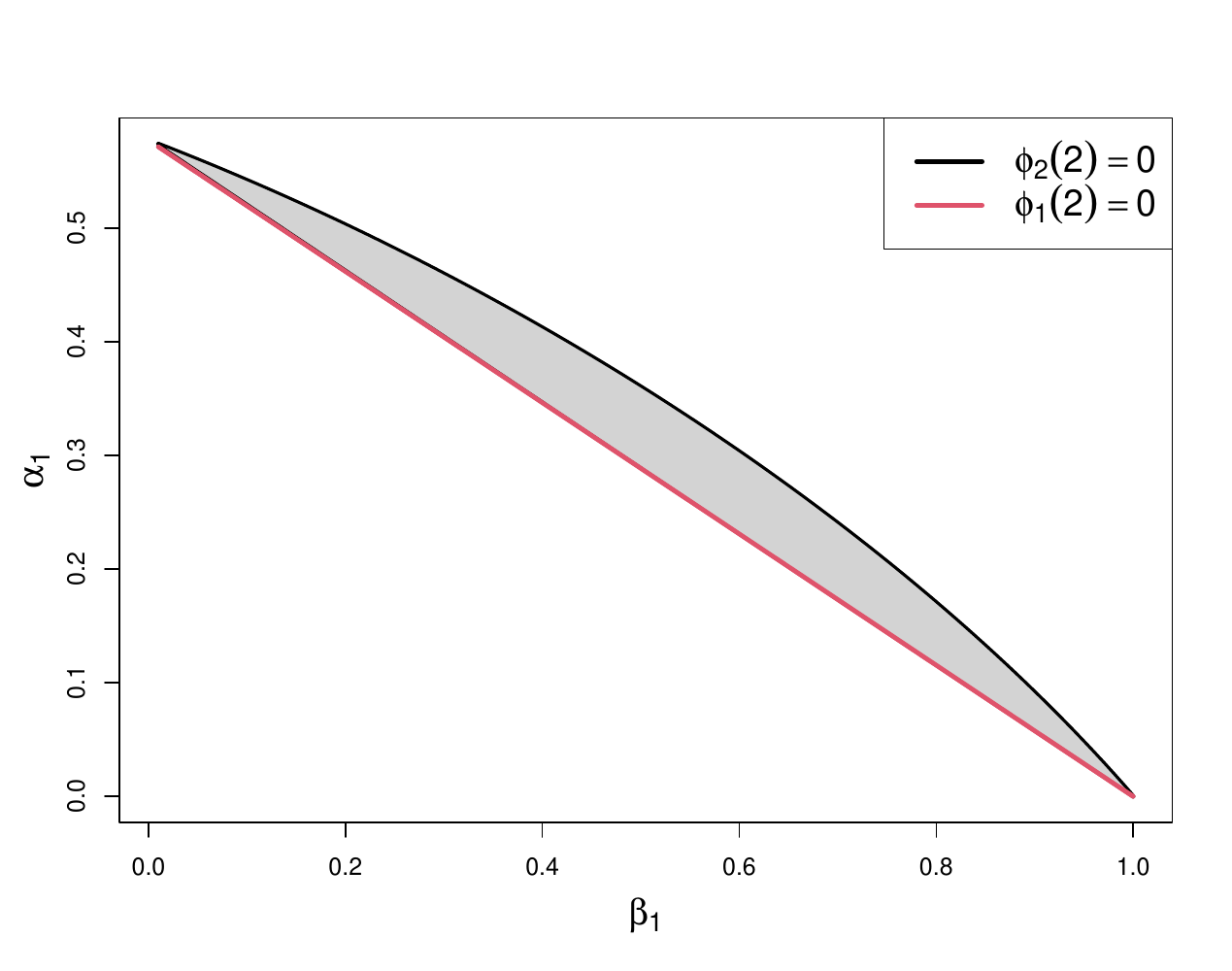}
    \caption{Illustration of sufficient conditions {for the finiteness of second moments of the volatility $(\sigma_t^2)$ of the GARCH(1,1) model, solution to the equation $\sigma_t^2=\omega + (\alpha_1 Z_{t-1}^2+\beta_1) \sigma_{t-1}^2$, $t\in \Z$, $(Z_t)$ iid Gaussian, in terms of the coefficients $(\alpha_1,\beta_1)$.  The red line corresponds to the equation $\alpha_1=(1-\beta_1)/\E[Z^4]^{1/2}$, $0<\beta_1<1$, such that the parameters $(\alpha_1,\beta_1)$ below the line satisfy $\E[\sigma_t^4]<\infty$ by an application of the first moment approach. The black curve corresponds to the root $\alpha_1$ of the equation $\alpha^2_1\E[Z^4]+2\alpha_1\beta_1\E[Z^2]+\beta_1^2=1$, $0<\beta_1<1$, corresponding to our second moment approach. The grey area represents the parameters $(\alpha_1,\beta_1)$ satisfying $\E[\sigma_t^4]<\infty$ by an application of our paper but excluded when using previous infinite memory literature.}}
    \label{fig:garch}
\end{figure}

\bibliographystyle{abbrvnat}
\bibliography{2018_autoregressive}
\end{document}